\documentclass{amsart}
\usepackage{amssymb,delimset,siunitx,booktabs}

\usepackage[normalem]{ulem}

\usepackage[mathscr]{euscript}
\usepackage[ruled,vlined,linesnumbered]{algorithm2e}
\usepackage{enumitem}
\setlist{itemsep=4pt, topsep=0pt, leftmargin=17pt}

\usepackage{mathtools}

\usepackage{subcaption}
\usepackage{xurl}

\usepackage{xcolor}

\definecolor{PKU}{cmyk}{0, 1, 1, .45}
\definecolor{BIT}{cmyk}{1, 0, 1, 0}
\usepackage[colorlinks, citecolor=BIT, linkcolor=PKU, pagebackref]{hyperref}
\newcolumntype{L}{>{$}l<{$}} 
\newcolumntype{R}{>{$}r<{$}} 
\newcolumntype{C}{>{$}c<{$}} 


\usepackage[capitalize]{cleveref}
\crefname{itm}{}{}
\creflabelformat{itm}{~\upshape(#2#1#3)}
\crefname{def}{Def.}{Defs.}
\Crefname{def}{Definition}{Definitions}
\creflabelformat{def}{~\upshape(#2#1#3)}
\crefname{ineq}{Ineq.}{Ineqs.}
\Crefname{ineq}{Inequality}{Inequalities}
\creflabelformat{ineq}{~\upshape(#2#1#3)}
\crefname{step}{Step}{Step}
\creflabelformat{step}{~\upshape(#2#1#3)}

\AddToHook{env/conjecture/begin}{\crefalias{theorem}{conjecture}}
\AddToHook{env/lemma/begin}{\crefalias{theorem}{lemma}}
\AddToHook{env/proposition/begin}{\crefalias{theorem}{proposition}}

\makeatletter
\newcommand\creflabel[2][\@currentcounter]{%
 \crefalias{\@currentcounter}{#1}\label{#2}}
\makeatother

\newtheorem{theorem}{Theorem}[section]
\newtheorem{conjecture}[theorem]{Conjecture}
\newtheorem{lemma}[theorem]{Lemma}

\numberwithin{equation}{section}

\allowbreak
\allowdisplaybreaks

\usepackage[numbers, sort&compress, nonamebreak, merge, elide, longnamesfirst]{natbib}

\crefname{figure}{FIGURE}{FIGURES}
\Crefname{figure}{FIGURE}{FIGURES}

\usepackage{tikz}
\usetikzlibrary{decorations.pathreplacing, calc, positioning, arrows.meta}
\tikzset{
vertex/.style={shape=circle, minimum size=1mm, fill=black, inner sep=0pt},
edge/.style={black, very thick},
ball/.style={shape=circle, ball color=black, minimum size=1mm, inner sep=0.5},
ellipsis/.style={shape=circle, fill, inner sep=.5},
range.lr/.style={|{Stealth}-{Stealth}|, thin},
range.l/.style={-{Stealth}|, thin},
range.r/.style={|{Stealth}-, thin},
EEdge/.style={very thick, color=black},
apball/.style={ball color=black}
}

\author[K. Zhang]{K. Zhang}
\address[K. Zhang]{School of Mathematics and Statistics, Beijing Institute of Technology, Beijing 102400, P.\ R.\ China.}
\email{kai@bit.edu.cn}

\keywords{Maximal matching,
Average size of maximal matching,
bicyclic graphs}
\subjclass[2020]{05C70, 05C35.}

\title{Extremal graphs for average size of maximal matchings in bicyclic graphs}

\begin{document}

\begin{abstract}
For a graph \(G\), let $avm(G)$ denote the average size
of its maximal matchings. This parameter was introduced by Engbers and
Erey in the study of extremal problems for maximal matchings, and they
asked for extensions from trees and unicyclic graphs to \(k\)-cyclic
graphs. In this paper, we solve the first non-unicyclic case by
determining the minimum value of $avm(G)$ over all
connected bicyclic graphs with \(n\) vertices and \(n+1\) edges. We prove
that, for every connected bicyclic graph \(G\) of order \(n\ge 5\),
\[
\operatorname{avm}(G)\ge \frac{4n-11}{2n-5}.
\]
Moreover, equality holds uniquely for the graph obtained from two
triangles sharing a common edge by attaching all remaining \(n-4\)
pendant edges to one of the two vertices of degree \(3\). The key point
is to translate the minimization of \(\operatorname{avm}(G)\) into
structural restrictions on small maximal matchings, which are then
analyzed through the three possible bicyclic core types.
\end{abstract}
\maketitle
\tableofcontents

\section{Introduction}

\maketitle
Throughout this paper, all graphs considered are finite and simple. At the beginning, we introduce some basic concepts in graph theory that are needed. For more details, please refer to \cite{Bondy2008}.

Let $G = (V, E)$ denote a simple connected graph with order $|V(G)|$ and size $|E(G)|$. In graph theory, a graph $G$ is called a $c$-cyclic graph if $|E(G)| = |V(G)| - 1 + c$. Specifically, when $c$ = 0, 1, 2, $G$ is respectively recognized as a tree, a unicyclic graph, and a bicyclic graph. A leaf vertex $l_u$ of a graph is a vertex of degree 1 incident to a vertex $u\in V(G)$. A vertex is called a support vertex if it has a leaf neighbor. A pendant edge is an edge containing a leaf vertex. Given a support vertex that has degree 2, a support edge is an edge containing the support vertex that is not a pendant edge. A matching $M$ of a graph $G$ is a subset of edges such that no two edges share a common vertex. If $M$ is contained in no other matchings, then it is called a maximal matching. A matching \( M \) of \( G \) is called a maximum matching if for any matching $M'$ of $G$, $|M| \geq |M'|$. A fundamental fact is that a maximum matching is always a maximal matching, but the converse is not true. For a vertex $v \in V(G) $, if $v$ is incident with an edge in $M$, then we say that $M$ covers $v$. A matching $M$ of $G$ is called a perfect matching if all vertices are covered by $M$.  Let \(e, f \in E(G)\) be two edges. We say that \(e\) dominates \(f\) if they share at least one endpoint, i.e., \(e \cap f \neq \varnothing\). For a set of edges \(S \subseteq E(G)\), \(S\) is called an edge dominating set of \(G\) if every edge of \(G\) either belongs to \(S\) or is dominated by some edge in \(S\).

In recent years, the study of the average size of matchings (including ordinary matchings and maximal matchings) has attracted increasing attention. Andriantiana et al.~\cite{Andriantiana2020b} investigated the average size of ordinary matchings and showed that among trees, stars achieve the minimum value while paths achieve the maximum; they also studied the average size of independent vertex sets~\cite{Andriantiana2020a}. Recently, extremal problems concerning maximal matchings have also attracted attention. Maximal matchings hold structural significance; for instance, a maximal matching in $G$ corresponds exactly to an independent dominating set in the line graph $L(G)$~\cite{Engbers2023}. To rigorously capture the expected behavior of such saturated states, Engbers and Erey~\cite{Engbers2023} formally introduced a new parameter, the average size of maximal matchings, denoted by $avm(G)$, defined as the arithmetic mean of the size of all maximal matchings of $G$.

Let \(\mathcal{M}(G)\) be the set of all maximal matchings of \(G\), and
let \(m_i(G)\) denote the number of maximal matchings of size \(i\).
Set
\[
m(G)=|\mathcal{M}(G)|=\sum_i m_i(G)
\]
and
\[
m'(G)=\sum_{M\in \mathcal{M}(G)} |M|
      =\sum_i i\,m_i(G).
\]
The average size of maximal matchings of \(G\), denoted by
\(avm(G)\), is defined by
\[
avm(G)
=
\frac{1}{|\mathcal{M}(G)|}
\sum_{M\in\mathcal{M}(G)} |M|
=
\frac{m'(G)}{m(G)}.
\]


The investigation into the extremal values of $avm(G)$ has proven to be mathematically intricate due to the complex behavior of fractional combinatorial terms. Engbers and Erey~\cite{Engbers2023} posed several interesting questions for investigation, including:

\noindent\textbf{Question 1.} Which graphs $G$ have the maximum or minimum value of $avm(G)$ when $G$ is restricted to a particular family?

Engbers and Erey \cite{Engbers2023} characterized the extremal graphs for $avm(G)$ in the families of trees and connected unicyclic graphs, trees with diameter $d\le 5$, and 2-regular graphs. They conjectured that the graphs minimizing the average size of maximal matchings among trees with fixed order and diameter are star-like trees. The description of the conjecture is as follows.

\begin{conjecture}\cite{Engbers2023}
Let $T$ be a tree of order $n$ with diameter $d$, where $d\neq 4$ and $d\ge 3$. Then, $avm(T)$ is uniquely minimized by $S_{n}(1^{n-d}, (d-1)^{(1)})$.
\end{conjecture} 

Building upon this foundation, Sui and Li \cite{Sui2025} confirmed this conjecture for diameters 6, 7 and 8. In \cite{Engbers2023}, Engbers and Erey posed another question about the average size of maximal matchings.

\noindent\textbf{Question 2.} Extend their results on $avm(G)$ from unicyclic graphs to $k$-cyclic graphs with $k\ge 2$.


In this paper, we make progress on Question 2 by focusing on the case $k=2$. Specifically, we investigate the average size of maximal matchings of $(n, n+1)$-graphs, i.e., connected simple graphs with $n$ vertices and $n+1$ edges. We characterize the $(n, n+1)$-graph with the smallest average size of maximal matchings among all $(n, n+1)$-graphs. Let $\mathcal{G}(n, n+1)$ denote the set of all simple connected graphs with $n$ vertices and $n+1$ edges. The base of a connected bicyclic graph is one of the following three types: two cycles sharing a path, two cycles sharing exactly one vertex, or two vertex-disjoint cycles connected by a path. Deng \cite{Deng2008} demonstrated that for any graph $G \in \mathcal{G}(n, n+1)$, there are two induced cycles $C_p$ and $C_q$ in $G$. Deng \cite{Deng2008} divided all the $(n, n+1)$-graphs with two cycles of lengths $p$ and $q$ into three classes:
\begin{itemize}
\item $\mathcal{A}(p, q, l)$ is the set of $G \in \mathcal{G}(n, n+1)$ in which the cycles $C_p$ and $C_q$ have a common path of length $l$.

\item $\mathcal{B}(p, q)$ is the set of $G \in \mathcal{G}(n, n+1)$ in which the cycles $C_p$ and $C_q$ have only one common vertex;
    
\item $\mathcal{C}(p, q)$ is the set of $G \in \mathcal{G}(n, n+1)$ in which the cycles $C_p$ and $C_q$ have no common vertex.
\end{itemize}

Note that the induced subgraph of vertices on the cycles of $G \in \mathcal{A}(p, q, l)$  (or $\mathcal{B}(p, q)$) is shown in \cref{core} (A) (or (B)). We refer to the induced subgraph formed by the vertices on the cycles as the core of graph $G$. For \(G \in \mathcal{C}(p, q)\), the induced subgraph on the vertices lying on the cycles consists of two disjoint cycles. We also draw the path connecting the two cycles, and the induced subgraph formed by the cycles together with this path is regarded as the core of \(\mathcal{C}(p, q)\), see \cref{core} (C). A pure core maximal matching of $G$ is a maximal matching of $G$ containing only edges of the core. Such a matching must cover every core vertex that is incident with a pendant edge.

\begin{figure}[h]
\centering
\subcaptionbox{The core of $\mathcal{A}(p, q, l)$\label{apql}}[0.28\textwidth]{
\begin{tikzpicture}[scale=0.85]
\node[vertex] (u) at (-1.5,0) {};
\node[vertex] (q) at (-1.0,0) {};
\node[vertex] (w) at (1.0,0) {};
\node[vertex] (v) at (1.5,0) {};
\draw[edge] (u) -- (q);
\draw[edge] (w) -- (v);
\node[vertex] (x) at (-0.9,0.7) {};
\node[vertex] (y) at (0.9,0.7) {};
\node[vertex] (e) at (0.9,-0.7) {};
\node[vertex] (r) at (-0.9,-0.7) {};
\draw[edge] (u) -- (x);
\draw[edge] (u) -- (r);
\draw[edge] (v) -- (e);
\draw[edge] (v) -- (y);
\draw[thick] (-0.9,0.7)--(-0.5,0.7);
\draw[thick] (0.9,0.7)--(0.5,0.7);
\draw[thick] (-1,0)--(-0.6,0);
\draw[thick] (1,0)--(0.6,0);
\draw[thick] (-0.9,-0.7)--(-0.5,-0.7);
\draw[thick] (0.9,-0.7)--(0.5,-0.7);
\node at (0,0) {$\cdots$};
\node at (0,0.7) {$\cdots$};
\node at (0,-0.8) {$\cdots$};
\end{tikzpicture}
}
\hspace{1em}
\subcaptionbox{The core of $\mathcal{B}(p, q)$ \label{bpq}}[0.28\textwidth]{
\begin{tikzpicture}[scale=0.85]
\node[vertex] (c) at (0,0) {};
\node[vertex] (l1) at (-1.3, -1) {};
\node[vertex] (l2) at (1.3, -1) {};
\node[vertex] (l3) at (-0.8, -1) {};
\node[vertex] (l4) at (0.8, -1) {};
\draw[thick] (-0.8, -1) -- (-0.4, -1);
\draw[thick] (0.8, -1) -- (0.4, -1);
\draw[edge] (c) -- (l1) -- (l3);
\draw[edge] (c) -- (l2) -- (l4);
\node[vertex] (r1) at (-1.3, 1) {};
\node[vertex] (r2) at (1.3, 1) {};
\node[vertex] (r3) at (-0.8, 1) {};
\node[vertex] (r4) at (0.8, 1) {};
\draw[thick] (-0.8, 1) -- (-0.4, 1);
\draw[thick] (0.8, 1) -- (0.4, 1);
\draw[edge] (c) -- (r1) -- (r3);
\draw[edge] (c) -- (r2) -- (r4);
\node at (0,1) {$\cdots$};
\node at (0,-1) {$\cdots$};
\end{tikzpicture}
}
\hspace{1em}
\subcaptionbox{The core of $\mathcal{C}(p, q)$\label{cpq}}[0.28\textwidth]{
\begin{tikzpicture}[scale=0.85]
%
%
%
\node[vertex] (v3) at (-0.7,0)  {};
\node[vertex] (v6) at (-0.2,0)  {};
\node[vertex] (v7) at (1.2,0)  {};
\draw[thick] (v3)--(0.1,0);
\node[vertex] (v1) at (-1.5, 1)  {};
\node[vertex] (v2) at (-1.5, -1) {};
\node[vertex] (v4) at (-1.5, 0.6)  {};
\node[vertex] (v5) at (-1.5, -0.6) {};
\draw[thick] (-1.5, 0.6)--(-1.5, 0.3);
\draw[thick] (-1.5, -0.6)--(-1.5, -0.3);
\node[vertex] (w1) at (1.7,0)  {};
\node[vertex] (w2) at (2.5, 1) {};
\node[vertex] (w3) at (2.5, -1) {};
\node[vertex] (w4) at (2.5, 0.6) {};
\node[vertex] (w5) at (2.5, -0.6){};
\draw[thick] (2.5, 0.6)--(2.5, 0.3);
\draw[thick] (2.5, -0.6)--(2.5, -0.3);
\draw[thick] (0.9,0)--(w1);
\draw[edge] (v3) -- (v1) -- (v4);
\draw[edge] (v3) -- (v2) -- (v5);
\draw[edge] (w1) -- (w2) -- (w4);
\draw[edge] (w1) -- (w3) -- (w5);
\node at (0.5,0) {$\cdots$};
\node at (-1.5,0.1) {$\vdots$};
\node at (2.5,0.1) {$\vdots$};
\end{tikzpicture}
}
\caption{}\label{core}
\end{figure}

By relying on this established tripartite classification, we can methodically evaluate the average size of maximal matchings by dissecting the unique leaf-attachment dynamics within each specific core topology. In this paper, our main objective is to characterize the lower bound of the average size of maximal matchings for the family of connected bicyclic graphs $\mathcal{G}(n, n+1)$. By systematically analyzing the topological configurations of bicyclic cores and the leaf-attachment dynamics, we identify the unique extremal graphs that minimize $avm(G)$ among all graphs of order $n$ in $\mathcal{G}(n, n+1)$.


Our main result is the following theorem.

\begin{theorem}\label{thm:main}
Let \(G\in \mathcal{G}(n,n+1)\) be a connected bicyclic graph with
\(n\ge 5\). Then
\[
avm(G)\ge \frac{4n-11}{2n-5}.
\]
Moreover, equality holds if and only if
\[
G\cong \theta_n^1(3,3),
\]
where all \(n-4\) pendant edges are attached to one of the two vertices
of degree \(3\) in the core of \(\theta_n^1(3,3)\).
\end{theorem}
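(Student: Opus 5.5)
The plan rests on one elementary observation: every maximal matching of a bicyclic graph $G$ of order $n\ge 5$ has size at least $1$, and most have size at least $2$. Writing $m=m(G)$, we get
\[
avm(G)=\frac{\sum_i i\,m_i(G)}{m}\ \ge\ \frac{m_1(G)+2\bigl(m-m_1(G)\bigr)}{m}=2-\frac{m_1(G)}{m}.
\]
Since the target value is $\frac{4n-11}{2n-5}=2-\frac{1}{2n-5}$, it suffices to control $m_1(G)$, the number of maximal matchings consisting of a single edge, relative to $m$. If $m_1(G)=0$, then $avm(G)\ge 2>\frac{4n-11}{2n-5}$ and there is nothing more to prove.

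\textbf{Step 1: graphs with a dominating edge.} A maximal matching of size $1$ is an edge $uv$ that dominates every edge, i.e.\ $V(G)\setminus\{u,v\}$ is independent. By connectivity every other vertex is adjacent to $u$ or $v$. Let $c$ be the number of common neighbours of $u$ and $v$. Then
\[
|E(G)| = 1+(n-2)+c = n+1,
\]
so $c=2$. Hence $G$ consists of two triangles $uva$, $uvb$ sharing the edge $uv$, with $s$ pendant vertices at $u$ and $t$ at $v$, where $s+t=n-4\ge 1$. In this family no other edge is dominating. For instance, $ua$ misses $vb$, and an edge from $u$ to a leaf misses $va$. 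So $m_1(G)=1$ in this case. Consequently, for every bicyclic $G$ we have $avm(G)\ge 2-1/m$ whenever $m_1(G)\ge 1$, and this family is the only one needing further work.

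\textbf{Step 2: counting inside the family.} Next I enumerate the maximal matchings of the graph with parameters $(s,t)$. Every maximal matching other than $\{uv\}$ has size exactly $2$: by the independence of $V\setminus\{u,v\}$, each edge meets $u$ or $v$, so no matching has size $\ge 3$. Therefore $avm(G)=2-1/m$ exactly, and the task reduces to showing $m\ge 2n-5$ with equality iff $st=0$.

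Classify the size-$2$ matchings by the partners of $u$ and $v$. These partners are, respectively, a leaf at $u$ or one of $a,b$, and a leaf at $v$ or one of $a,b$. A pair is admissible when the two partners are distinct, and such a pair is then automatically maximal. This gives
\[
m = 1 + (s+2)(t+2) - 2 = st + 2(s+t) + 3 = st + 2n - 5.
\]
Here the $-2$ removes the two pairs in which $u$ and $v$ would share the partner $a$ or $b$. Thus $m\ge 2n-5$, with equality iff $s=0$ or $t=0$, i.e.\ iff $G\cong\theta_n^1(3,3)$ with all $n-4$ pendant edges at one degree-$3$ vertex. As a check, for $t=0$ this gives $m=2n-5$ and $m'=4n-11$.

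\textbf{Step 3: equality.} Equality in the theorem requires $m_1(G)\ge 1$, since otherwise $avm(G)\ge 2$. Then Step 1 places $G$ in the family, and Step 2 forces $st=0$.

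The step I expect to need the most care is Step 1: verifying that the edge count really forces exactly two common neighbours and that no further dominating edges arise for $n\ge5$. The small-$n$ boundary, where some leaf counts are $0$, should be checked by hand. The rest is a direct count and does not need the core classification $\mathcal A,\mathcal B,\mathcal C$ at all, beyond noting that the extremal core is of type $\mathcal A(3,3,1)$.
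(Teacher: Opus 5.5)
Your argument is correct, and it takes a different and much shorter route than the paper.

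\textbf{The paper's route.} It splits $\mathcal{G}(n,n+1)$ into the core classes $\mathcal{A}(p,q,l)$, $\mathcal{B}(p,q)$, $\mathcal{C}(p,q)$. It then finds a classwise minimizer in each class ($\theta_n^1(3,3)$, $R_n(3,3)$, $T_n^1(3,3)$) by enumerating leaf attachments case by case and using bounds like $avm(G)\ge 3-m_2(G)/m(G)$. Finally it compares $avm(R_n(3,3))$ and $avm(T_n^1(3,3))$ with $\frac{4n-11}{2n-5}$.

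\textbf{Your route.} You note that the target equals $2-\frac{1}{2n-5}<2$, so only graphs with a maximal matching of size $1$ matter. Your edge count $1+(n-2)+c=n+1$ then identifies these graphs directly: two triangles on a common edge $uv$, with $s$ and $t$ leaves at $u$ and $v$. No core classification is needed for this.

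\textbf{Comparison.} Your Step 2 count $m=st+2n-5$ agrees with the paper's computation $ab+2k+3$ inside $\mathcal{A}(3,3,1)$. However, your edge count replaces the paper's separate arguments that rule out leaves at the degree-$2$ core vertices $x,y$ and support vertices off the core. It also makes Sections 3 and 4 unnecessary for the main theorem: graphs in $\mathcal{B}$ and $\mathcal{C}$ have no dominating edge, so $avm\ge 2$ there at once. The paper's route gives extra information, namely the classwise extremal graphs in $\mathcal{B}(p,q)$ and $\mathcal{C}(p,q)$. Yours gives a complete description of all bicyclic graphs with $avm(G)<2$, with the exact value $avm(G)=2-1/(st+2n-5)$.

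\textbf{Minor point.} The small-$n$ check you flag is not needed. The count $(s+2)(t+2)-2$ holds for all $s,t\ge 0$, and the hypothesis $n\ge 5$ only ensures $s+t\ge 1$.
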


The proof proceeds by considering the three core types separately. In
Section~2, we determine the extremal graph in the class \(\mathcal{A}(p,q,l)\).
Sections~3 and~4 deal with the classes \(\mathcal{B}(p,q)\) and \(\mathcal{C}(p,q)\),
respectively. Finally, in Section~5, we compare the three classwise
extremal values and complete the proof of Theorem~\ref{thm:main}.

\section{The graph with the smallest average size of maximal matchings in $\mathcal{A}(p, q, l)$}
In this section, we will find the $(n, n+1)$-graph with the smallest average size of maximal matchings in $\mathcal{A}(p, q, l)$.

Let $\theta_n^l(p, q)$ be the graph obtained from the graph in \cref{core}(A) by attaching $k=n+1+l-(p+q)$ pendant edges to one of its vertices with degree 3 (see \cref{nlpq}(A)). In \cref{nlpq}(B), the core of $\theta_n^1(3, 3)$ is the induced subgraph formed by vertices $u, v, x, y$.

\begin{figure}[h]
\centering
\subcaptionbox{$\theta_n^l(p, q)$}[0.3\textwidth]{
\begin{tikzpicture}[scale=0.8]
\node[vertex] (u) at (-1.5,0) {};
\node[vertex] (q) at (-1,0) {};
\node[vertex] (w) at (1,0) {};
\node[vertex] (v) at (1.5,0) {};
\draw[edge] (u) -- (q);
\draw[edge] (w) -- (v);
\node at (0,0.3) [label=above:$\dots$] {};
\node at (0,-0.3) [label=above:$\dots$] {};
\node at (0,-0.9) [label=above:$\dots$] {};
\node[vertex] (x) at (-0.5,0.6)  {};
\node[vertex] (y) at (0.5,0.6) {};
\node[vertex] (e) at (0.5,-0.6)  {};
\node[vertex] (r) at (-0.5,-0.6) {};
\draw[edge] (u) -- (x);
\draw[edge] (u) -- (r);
\draw[edge] (v) -- (e);
\draw[edge] (v) -- (y);
\node at (-1.5,-0.5) [label=below:$\dots$] {};
\node[vertex] (u1) at (-2,-0.8) {};
\node[vertex] (u2) at (-1,-0.8) {};
\draw[edge] (u) -- (u1);
\draw[edge] (u) -- (u2);
\end{tikzpicture}
}
\hspace*{1em}
\subcaptionbox{$\theta_n^1(3, 3)$}[0.3\textwidth]{
\begin{tikzpicture}[scale=0.8]
\node[vertex] (u) at (-1.5,0) [label=left:$u$] {};
\node[vertex] (v) at (1.5,0) [label=right:$v$] {};
\node[vertex] (x) at (0,0.8) [label=above:$x$] {};
\node[vertex] (y) at (0,-0.8) [label=below:$y$] {};
\node at (-1.5,-0.5) [label=below:$\dots$] {};
\node[vertex] (u1) at (-2,-0.8) {};
\node[vertex] (u2) at (-1,-0.8) {};
\draw[edge] (u) -- (v);
\draw[edge] (u) -- (u1);
\draw[edge] (u) -- (u2);
\draw[edge] (u) -- (x) -- (v);
\draw[edge] (u) -- (y) -- (v);
\end{tikzpicture}
}
\caption{$\theta_n^l(p, q)$ and $\theta_n^1(3, 3).$}\label{nlpq}
\end{figure}
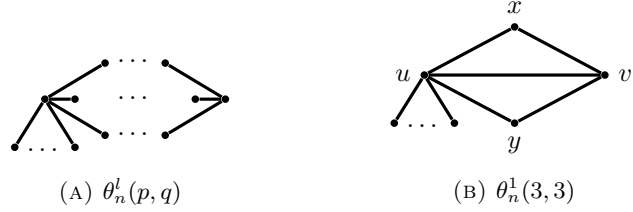

%
%

\begin{theorem}\label{thm:A}
Let $G\in\mathcal{A}(p, q, l)$. Then $avm(G)\ge avm(G^1)$ with the equality if and only if  $G\cong G^1$, where $G^1=\theta_n^1(3, 3)$ is the graph in \cref{nlpq}(B).
\end{theorem}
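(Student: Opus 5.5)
The plan is to compare \(avm(G)\) with \(2\) through a simple counting identity. Splitting the maximal matchings by size,
\[
avm(G)=\frac{\sum_i i\,m_i(G)}{m(G)}\;\ge\; 2-\frac{m_1(G)}{m(G)}+\frac{\sum_{i\ge 3}m_i(G)}{m(G)} \;\ge\; 2-\frac{m_1(G)}{m(G)} .
\]
First compute the target value. In \(G^1=\theta_n^1(3,3)\) with \(k=n-4\ge 1\) pendants at \(u\), every maximal matching covers \(u\). Matching \(u\) to a pendant leaves the two choices \(vx\) or \(vy\). Matching \(u\) to \(x\) (resp. \(y\)) forces \(vy\) (resp. \(vx\)). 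Matching \(u\) to \(v\) gives the single matching \(\{uv\}\). Hence \(m(G^1)=2k+3=2n-5\), with all matchings of size \(2\) except one of size \(1\), so
\[
avm(G^1)=\frac{4k+5}{2k+3}=\frac{4n-11}{2n-5}=2-\frac{1}{2n-5}.
\]

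If \(m_1(G)=0\), then the display gives \(avm(G)\ge 2>avm(G^1)\), and we are done. So assume \(G\in\mathcal A(p,q,l)\) has a maximal matching \(\{uv\}\) of size one. Then every edge of \(G\) meets \(u\) or \(v\), so every other vertex is adjacent to \(u\) or to \(v\), and every cycle passes through \(u\) or \(v\). Consequently no edge joins two vertices outside \(\{u,v\}\).

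Count edges. There is the edge \(uv\), the \(n-2\) other vertices each contribute at least one edge, and there are \(c\) extra edges, one for each common neighbour of \(u\) and \(v\). The total is \(n-1+c=n+1\), so \(c=2\). Hence the core is two triangles \(uvx\), \(uvy\) sharing \(uv\), and \(G\) is obtained by attaching \(a\) pendants at \(u\) and \(b\) at \(v\) with \(a+b=n-4\). Moreover \(uv\) is the only dominating edge; for example \(ux\) misses \(vy\). Thus \(m_1(G)=1\).

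Next I would count all maximal matchings of this two-parameter family. Suppose first \(a,b\ge1\), so both \(u\) and \(v\) must be covered. The possibilities are:
\begin{itemize}
\item \(\{uv\}\): one matching;
\item pendant at \(u\) together with pendant at \(v\): \(ab\) matchings;
\item pendant at \(u\) together with \(vx\) or \(vy\): \(2a\) matchings;
\item \(ux\) or \(uy\) together with a pendant at \(v\) or with the remaining edge \(vy\) or \(vx\): \(2(b+1)\) matchings.
\end{itemize}
All of these except \(\{uv\}\) have size exactly \(2\). Therefore \(m(G)=ab+2a+2b+3\) and \(m(G)-(2n-5)=ab\), which gives
\[
avm(G)=2-\frac{1}{m(G)}\ge 2-\frac1{2n-5},
\]
with equality iff \(ab=0\). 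The boundary case \(ab=0\) is exactly \(G^1\) up to the symmetry \(u\leftrightarrow v\), and the count above specialises correctly, as checked in the first paragraph.

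The main obstacle is the structural step: showing rigorously that a size-one maximal matching forces the core \(\theta(3,3)\) and forces no non-pendant tree parts. The edge count \(n-1+c=n+1\) handles this cleanly. The rest is routine bookkeeping, including checking that no matching of size \(\ge3\) exists (immediate, since all edges meet \(u\) or \(v\)).
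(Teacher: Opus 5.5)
Your proof is correct. Its skeleton is the same as the paper's: $avm(G)\ge 2$ unless $G$ has a maximal matching of size $1$, and the graphs that do have one are $\theta(3,3)$ with $a$ pendants at $u$ and $b$ at $v$. For these, $m(G)=ab+2(n-4)+3$ and $avm(G)=2-1/m(G)$, so equality holds exactly when $ab=0$.

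The difference is in how you identify that family. The paper works inside $\mathcal{A}(p,q,l)$ and eliminates cases one at a time, each time arguing somewhat informally that no single edge dominates the graph:
\begin{itemize}
\item $p\ge 4$, $q\ge 4$ or $l\ge 2$;
\item a support vertex off the core;
\item leaves attached to the degree-$2$ core vertices $x,y$.
\end{itemize}
You replace all of this with one count. If $\{uv\}$ is a maximal matching, every edge meets $u$ or $v$. Hence $|E(G)|=1+(n-2)+c$, where $c$ is the number of common neighbours of $u$ and $v$, and $|E(G)|=n+1$ forces $c=2$. This is shorter and airtight.

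It also never uses the hypothesis $G\in\mathcal{A}(p,q,l)$. It shows that throughout $\mathcal{G}(n,n+1)$ the only graphs with $m_1(G)>0$ are these $\theta(3,3)$-based graphs. Consequently your argument proves Theorem~\ref{thm:main} on its own: every graph in $\mathcal{B}(p,q)$ or $\mathcal{C}(p,q)$ has $m_1=0$, hence $avm\ge 2>\frac{4n-11}{2n-5}$. The paper's separate analyses in Sections~3 and~4 are needed only for the extra information about the class-wise minimizers $R_n(3,3)$ and $T_n^1(3,3)$, not for the global bound.
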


\begin{proof}Since there is 1 maximal matching with size 1 ($(u,v)$) and $2(n-3)$ maximal matchings with size 2, $k=n-4$, giving
\[
avm(G^1)= \frac{4n-11}{2n-5} = 2 - \frac{1}{2k+3}<2.
\]


If $p \ge 4$, $q \ge 4$, or $l \ge 2$, a maximal matching of size 1 exists iff its unique edge dominates all edges, then no single edge can dominate the core, meaning every maximal matching has size at least 2, and so $avm(G) \ge 2$. Thus, to achieve $avm(G) < 2$, we must have $p=3, q=3$, and $l=1$, which means $G$ contains the core of $\theta_n^1(3,3)$. If $G$ contains a support vertex that is not on the core, then again every maximal matching has size at least 2. Suppose the $k$ leaves are attached to the four core vertices: $a$ leaves on $u$, $b$ on $v$, $c$ on $x$, and $d$ on $y$, with $a+b+c+d = k$. We exhaustively classify the attachments.

\noindent\textbf{Claim.} If $c>0$ or $d>0$, then $avm(G)\ge 2$.

Suppose $c>0$, any maximal matching $M$ must dominate $x$ and its leaves. If $M$ contains a pendant edge, the remaining core vertices form a cycle $C_3$, requiring exactly one additional edge. If $M$ contains no pendant edges, it must select a core edge incident to $x$ to cover it. The remaining uncovered vertices in the core will still require at least one additional edge to be maximally matched. Consequently, $|M| \ge 2$ and $avm(G) \ge 2$.

Since the case where all $k$ leaves are attached to a single vertex $u$ corresponds exactly to $G^1$, to prove the uniqueness of the extremal graph, we only need to consider the case where the leaves are distributed between both $u$ and $v$ (i.e., $a,b\ge1, a+b=k$).

\begin{itemize}
    \item \textit{Select both $(u,l_u), (v,l_v)$ ($ab$ ways):} $x, y$ isolated. Size of maximal matching: 2.
    \item \textit{Select $(u,l_u)$ ($a$ ways):} Must cover $v$ using $(v,x)$ or $(v,y)$. Size of maximal matching: 2. (Total $2a$ maximal matchings).
    \item \textit{Select $(v,l_v)$ ($b$ ways):} By symmetry, $2b$ maximal matchings of size 2.
    \item \textit{Pure Core:} $\{(u,v)\}$ (Size of maximal matching: 1); $\{(u,x), (v,y)\}$ (Size of maximal matching: 2); $\{(u,y), (v,x)\}$ (Size of maximal matching: 2).
\end{itemize}
\[
\begin{cases}
m(G) = ab + 2a + 2b + 1 + 2 = ab + 2k + 3,\\[4pt]
m'(G) = 2ab + 2(2a) + 2(2b) + 1 + 4 = 2ab + 4k + 5,
\end{cases}
\]
thus $avm(G)= 2 - \dfrac{1}{ab+2k+3}$.
Since $a,b \ge 1$, we have $ab > 0$, making the denominator strictly larger than $2k+3$. Hence, $avm(G) > 2 - \frac{1}{2k+3} = avm(G^1)$.This completes this proof.
\end{proof}

\section{The graph with the smallest average size of maximal matchings in $\mathcal{B}(p, q)$}
In this section, we will find the $(n, n+1)$-graph with the smallest average size of maximal matchings in $\mathcal{B}(p, q)$. Let $R_n(p, q)$ be a graph in $\mathcal{B}(p, q)$ such that $k=n+1-(p+q)\ge 1$ pendant edges are attached to a single vertex of degree 2. In \cref{bpq}(B), the core of $\mathcal{B}(3, 3)$ is the induced subgraph formed by vertices $u, v_1, v_2, w_1, w_2$.
 
\begin{figure}[h]
\subcaptionbox{$R_n(3, 3)$}[0.3\textwidth]{
\begin{tikzpicture}[scale=0.8]
\node[vertex] (c) at (0,0) [label=below:$u$] {};
\node at (-1.9,-0.3) [label=below:$\dots$] {};
\node[vertex] (l1) at (-1, 0.6) [label=above:$v_1$]{};
\node[vertex] (l2) at (-1, -0.6) [label=below:$v_2$]{};
\node[vertex] (r1) at (1, 0.6) [label=above:$w_1$]{};
\node[vertex] (r2) at (1, -0.6) [label=below:$w_2$]{};
\node[vertex] (r3) at (-2.5, -0.6) {};
\node[vertex] (r4) at (-1.5, -0.6) {};
\draw[edge] (c) -- (l1) -- (l2) -- (c);
\draw[edge] (c) -- (r1) -- (r2) -- (c);
\draw[edge] (l1) -- (r3);
\draw[edge] (l1) -- (r4);
\end{tikzpicture}
}
\hspace*{1em}
\subcaptionbox{The core of $\mathcal{B}(3, 3)$}[0.3\textwidth]{
\begin{tikzpicture}[scale=0.8]
\node[vertex] (c) at (0,0) [label=below:$u$] {};
\node[vertex] (l1) at (-1, 0.6) [label=above:$v_1$]{};
\node[vertex] (l2) at (-1, -0.6) [label=below:$v_2$]{};
\node[vertex] (r1) at (1, 0.6) [label=above:$w_1$]{};
\node[vertex] (r2) at (1, -0.6) [label=below:$w_2$]{};
\draw[edge] (c) -- (l1) -- (l2) -- (c);
\draw[edge] (c) -- (r1) -- (r2) -- (c);
\end{tikzpicture}
}
\caption{}\label{bpq}
\end{figure}

\begin{lemma}
Avm$(G^2)=\frac{7n-27}{3n-11}$, where $G^2=R_n(3, 3)$ is the graph in \cref{bpq}(A).
\end{lemma}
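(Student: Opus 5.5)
The plan is to enumerate $\mathcal{M}(G^2)$ directly, split according to whether a maximal matching uses a pendant edge. In $G^2=R_n(3,3)$ the core is the two triangles $u v_1 v_2$ and $u w_1 w_2$ sharing $u$, and all $k=n-5$ pendant edges hang at $v_1$. Two facts drive the split: any maximal matching must dominate the pendant edges at $v_1$, so it either contains exactly one pendant edge or covers $v_1$ by a core edge; and at most one pendant edge can be used, since they all share $v_1$.

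\textbf{Case 1: $M$ contains a pendant edge $v_1l$} ($k$ choices). Then $v_1$ is covered. $M\setminus\{v_1l\}$ must be a maximal matching of $G^2-v_1-\text{leaves}$, the graph on $\{u,v_2,w_1,w_2\}$ with edges $uv_2,uw_1,uw_2,w_1w_2$. I will check that this graph has exactly three maximal matchings:
\begin{itemize}
\item $\{uv_2,w_1w_2\}$;
\item $\{uw_1\}$, which dominates everything because $v_2$ and $w_2$ are adjacent only to covered vertices;
\item $\{uw_2\}$, by the same reasoning.
\end{itemize}
Hence this case contributes $3k$ matchings with total size $k(3+2+2)=7k$.

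\textbf{Case 2: $M$ has no pendant edge.} Then $v_1$ must be covered by $v_1u$ or $v_1v_2$.
\begin{itemize}
\item If $v_1u\in M$, the only undominated edge left is $w_1w_2$. This gives one matching of size $2$.
\item If $v_1v_2\in M$, the rest must be a maximal matching of the triangle $uw_1w_2$. This gives three matchings of size $2$.
\end{itemize}
So this case contributes $4$ matchings with total size $8$.

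Summing the two cases gives $m(G^2)=3k+4$ and $m'(G^2)=7k+8$. Substituting $k=n-5$ yields $m(G^2)=3n-11$ and $m'(G^2)=7n-27$, so $avm(G^2)=\frac{7n-27}{3n-11}$.

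The only delicate step is the claim in Case 1 that $\{uw_1\}$ and $\{uw_2\}$ are already maximal, i.e.\ that nothing is missed or double-counted there. I will verify it by checking the neighbourhoods of $v_2$ and $w_2$ (respectively $w_1$) explicitly.
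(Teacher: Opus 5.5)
Your enumeration is correct and gives exactly the paper's list of $3k+4$ maximal matchings ($2k+4$ of size $2$ and $k$ of size $3$), but you split the cases differently.

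\emph{The paper's split.} It conditions on how the leaf-free triangle $uw_1w_2$ is dominated: the matching contains $uw_1$, $uw_2$ or $w_1w_2$. It uses the symmetry $w_1\leftrightarrow w_2$ to halve the work. The pendant edges enter only as a last choice: an alternative to $v_1v_2$ after $uw_1$ or $uw_2$, and a forced completion after $w_1w_2,uv_2$.

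\emph{Your split.} You condition on how the support vertex $v_1$ is covered. You then use the standard reduction that a matching containing $xy$ is maximal in $G$ if and only if the rest is maximal in $G-x-y$. Each branch becomes a $k$-independent count in a fixed small graph (a triangle with a pendant edge, a single edge, a triangle). So the factor $k$ and the totals $7k$ and $8$ fall out at once, and the same scheme adapts directly to leaves at another vertex, as in the paper's count for leaves at $u$. Yours is slightly more mechanical; the paper's is slightly more economical.

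One point to state explicitly: your opening dichotomy (a pendant edge is used, or $v_1$ is covered by a core edge) needs $k\ge 1$. The definition of $R_n(p,q)$ guarantees this. For $k=0$, the maximal matching $\{uv_2,w_1w_2\}$ would be missed.
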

\begin{proof}
Since any maximal matching contains either $w_1w_2$ or $uw_1$ (and $uw_2$ is symmetric), we only need to consider the following two cases.

\noindent\textbf{Case 1.} $(u, w_1)$ belongs to the maximal matching. The remaining edges $(u, w_2)$ and $(w_1, w_2)$ are already dominated by $u$ or $w_1$. Edges $(u, v_1)$ and $(u, v_2)$ are dominated by $u$. The only undominated edges are $(v_1,v_2)$ and the pendant edge $(v_1,l_{v_1})$. Since they all share the vertex $v_1$, we must choose exactly one of them to add to the matching to achieve maximality. Choosing $(v_1,v_2)$ yields the maximal matching $\{(u, w_1), (v_1, v_2)\}$ of size $2$, and there is $1$ such maximal matching. Choosing $(v_1, l_{v_1})$ yields the maximal matching $\{(u, w_1), (v_1,l_{v_1})\}$ of size $2$, and there are $k$ such maximal matchings. By symmetry, selecting $(u, w_2)$ gives another $1+k$ maximal matchings. Hence, this case contributes $2k+2$ maximal matchings of size $2$.

\noindent\textbf{Case 2.} $(w_1, w_2)$ belongs to the maximal matching. The three possible choices for the second edge are $(u, v_1)$, $(v_1, v_2)$, or $(u, v_2)$. Choosing $(u, v_1)$ yields the unique maximal matching $\{(w_1, w_2), (u, v_1)\}$ of size $2$; choosing $(v_1, v_2)$ yields the unique maximal matching $\{(w_1, w_2), (v_1, v_2)\}$ also of size $2$; and choosing $(u, v_2)$ forces us to add exactly one pendant edge $(v_1, l_{v_1})$, giving $k$ maximal matchings $\{(w_1, w_2), (u, v_2), (v_1, l_{v_1})\}$ of size $3$. Hence this case contributes 2 maximal matchings of size $2$ and $k$ maximal matchings of size $3$.

\[
\begin{cases}
m(G^2) = (2k+2) + 2 + k = 3k + 4,\\[4pt]
m'(G^2) = 2 \times (2k+4) + 3 \times k = 7k + 8.
\end{cases}
\]

Substituting $k = n-5$, we obtain the average size:
\[
avm(G^2) = \frac{7(n-5)+8}{3(n-5)+4} = \frac{7n-27}{3n-11}=3-\frac{2k+4}{3k+4}=3-\frac{2n-6}{3n-11}.
\]

\end{proof}

\begin{lemma}\label{lem:no-size1}
For any graph $G \in \mathcal{B}(p, q)$, there exists no maximal matching of size $1$.
\end{lemma}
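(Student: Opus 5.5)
A maximal matching $\{e\}$ of size $1$ is the same thing as a single edge $e=ab$ that dominates every edge of $G$, i.e.\ every edge of $G$ has an endpoint in $\{a,b\}$. So I will show that no edge of a graph $G\in\mathcal{B}(p,q)$ can dominate all edges. It suffices to do this for the core, because the core is a subgraph of $G$: an edge dominating all of $E(G)$ would in particular dominate every core edge. Moreover, such an edge is either a core edge or a non-core edge, and a non-core edge meets the core in at most one vertex.

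Let $w$ be the common vertex of the induced cycles $C_p$ and $C_q$, where $p,q\ge 3$. The two cycles share only $w$, so they are edge-disjoint, and the core contains $p+q\ge 6$ edges. I will use a vertex-cover count: $\{a,b\}$ must be a vertex cover of the core. In the core every vertex other than $w$ has degree $2$, and $w$ has degree $4$. Hence any two vertices cover at most $4+2=6$ edges, and exactly $5$ if the two vertices are adjacent and one of them is $w$, since the joining edge is then counted twice. The dominating edge $ab$ must be an actual edge of $G$, so $a$ and $b$ are adjacent. I split into two cases.

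\emph{Case 1: $ab$ is a core edge.} Two adjacent core vertices cover at most $4+2-1=5<6$ core edges, so some core edge is not dominated.

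\emph{Case 2: $ab$ is a non-core edge.} At most one of $a,b$ lies on the core, so at most $4<6$ core edges are dominated.

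In both cases some core edge is not dominated by $ab$. Therefore $\{ab\}$ is not maximal, every maximal matching of $G$ has size at least $2$, and the lemma follows.

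The only delicate point is the degree count, in particular the fact that the shared edge $ab$ is counted twice. A more hands-on alternative is to argue directly in the smallest case $p=q=3$: any edge incident to $w$ leaves the opposite edge of the other triangle undominated, and an edge not incident to $w$ leaves the whole other triangle undominated. Larger cycles only add further undominated edges.
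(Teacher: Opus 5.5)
Your proof is correct, but it takes a different route from the paper's.

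The paper argues locally around the shared vertex $u$ and exhibits a specific undominated edge in each case:
\begin{itemize}
\item an edge of $C_p$ avoiding $u$ cannot dominate the edges of $C_q$ avoiding $u$, because $C_p-u$ and $C_q-u$ are vertex-disjoint;
\item an edge $uv$ with $v\in V(C_p)$ misses every edge of $C_q$ not incident to $u$.
\end{itemize}

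You instead use a global vertex-cover count. The pair $\{a,b\}$ would have to cover all $p+q\ge 6$ core edges. If $ab$ is a core edge, adjacent vertices cover at most $\deg(a)+\deg(b)-1\le 5$ core edges. If $ab$ is a non-core edge, it covers at most $4$.

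Your counting route treats every edge uniformly. It also explicitly disposes of non-core edges (pendant edges and edges of attached trees), which the paper's proof passes over without comment.

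The price is that you need the exact core degrees, namely that the induced subgraph on the cycle vertices is exactly $C_p\cup C_q$, with no chords. This is true: a connected graph with $|E(G)|=|V(G)|+1$ has cyclomatic number $2$, and the two cycles already account for it. You should state this in a sentence, since both your degree claim and your Case~2 rely on it.

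Your closing ``hands-on'' argument for $p=q=3$ is essentially the paper's proof.
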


\begin{proof}
Any graph in this class consists of two cycles ($C_p$ and $C_q$) that intersect in exactly one vertex, denoted by $u$. Any single edge covers two vertices. If $e$ is an edge of $C_p$ not incident to $u$, then it cannot dominate the edges of $C_q$ that are also not incident to $u$. If $e=(u, v)$ (assume that $v\in V(C_p)$), it cannot dominate an edge that is not incident to $u$ and $v$ in $C_p$ when $p\ge 4$, and it cannot dominate an edge that is not incident to $u$ in $C_q$. Hence, a single edge can never dominate the whole graph, and therefore no maximal matching can have size $1$, every maximal matching $M$ in $G$ satisfy $|M| \ge 2$.
\end{proof}

\begin{lemma}
\label{lem:no-c4}
For any graph $G \in \mathcal{B}(p, q)$, if $\max \{p, q\} \ge 4$, then $avm (G) > avm(G^2)$.
\end{lemma}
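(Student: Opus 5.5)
The plan is to compare $avm(G)$ with $avm(G^2)=3-\frac{2n-6}{3n-11}$, which is strictly less than $7/3$ for all $n\ge 6$ and tends to $7/3$ from below. By \cref{lem:no-size1} every maximal matching of $G$ has size at least $2$. So it suffices to show that when $\max\{p,q\}\ge 4$, maximal matchings of size $\ge 3$ are "heavy enough". The cleanest sufficient form is
\[
m_{\ge 3}(G)\ \ge\ \tfrac{1}{2}\,m_2(G)\cdot(1+\varepsilon),
\]
or a direct comparison of the ratio $m'(G)/m(G)$ with $(7n-27)/(3n-11)$.

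\textbf{Step 1: control $m_2(G)$.} Assume $p\ge 4$ and let $C_p$ and $C_q$ share the vertex $u$. A maximal matching of size $2$ is a pair of disjoint edges that dominates every edge. Since $C_p$ with $p\ge4$ already needs two edges to dominate it, and $C_q$ needs at least one, both edges must cover $u$'s neighbourhood structure tightly. Concretely, one edge must contain $u$ or dominate all of $C_q-u$. I would show that if $p\ge 5$ or $q\ge 4$ there are no maximal matchings of size $2$ at all, which gives $avm(G)\ge 3>avm(G^2)$ directly. In the cases that survive, namely $p=4$ with $q=3$ or $q=4$ (the latter needs checking), the size-$2$ maximal matchings form a bounded list. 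Their number is independent of the pendant multiplicities, except for pendant edges at a vertex that both edges must avoid. Moreover, any pendant edge or tree attached to a vertex not covered by a size-$2$ candidate destroys that candidate. Tree branches at non-core vertices destroy all candidates, as in the proof of \cref{thm:A}.

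\textbf{Step 2: the remaining configuration, $p=4$, $q=3$.} Here I would list the $O(1)$ size-$2$ candidates, which are pairs from the edges of $C_4\cup C_3$ covering a vertex set that dominates everything. Each candidate survives only if all leaves sit on its covered vertices. I would then count the maximal matchings of size $\ge 3$ as a function of the pendant counts $a_v$. Each extra leaf adds at least one new maximal matching of size $\ge 3$ (swap a core edge at a support vertex for the pendant edge, then complete). This should give $m_{\ge3}\ge k$ plus a constant while $m_2\le c$, so $avm(G)\ge 3-\frac{c}{m}$ with $m\ge k+c'$. Comparing with $3-\frac{2k+4}{3k+4}$, whose deficit tends to $2/3$, shows that $G$ wins once the deficit $c/m$ is below $2/3$. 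The small $n$ cases are then checked by hand.

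\textbf{Main obstacle.} The difficulty is Step 2 when $n$ is small, and the general argument that $m_2$ stays bounded. If the ratio inequality is not uniform, I would first try a direct injection. Map each size-$2$ maximal matching, together with a choice among $O(1)$ labels, to size-$\ge 3$ maximal matchings, obtaining $m_{\ge3}\ge 2m_2$-type bounds that beat $G^2$, in which $m_{\ge3}/m_2=k/(2k+4)<1/2$.
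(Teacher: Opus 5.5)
Your frame is the paper's: the bound $avm(G)\ge 3-m_2(G)/m(G)$ from Lemma~\ref{lem:no-size1}, plus a case split on $(p,q)$. Your remark that $avm(G^2)<7/3$ for every $n$ is a real improvement on it. It reduces the lemma to the $n$-free inequality $2m_{\ge3}(G)\ge m_2(G)$, so none of the paper's $n$-dependent comparisons or small-$n$ checks are needed.

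However, two of your concrete claims are false.

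\textbf{The case $(p,q)=(5,3)$.} It does have size-$2$ maximal matchings. With $C_5=uv_1v_2v_3v_4u$ and triangle $uw_1w_2u$, both $\{uw_1,v_2v_3\}$ and $\{uw_2,v_2v_3\}$ are maximal. The bare core has $m=9$ and $m'=25$, so $avm=25/9<3$, and your Step~1 conclusion $avm(G)\ge 3$ fails. This case needs its own count, as in the paper's Case~3, but in your framework it is short. Those two are the only possible size-$2$ maximal matchings, even with trees attached, so $m_2\le 2$. The three pairwise adjacent triangle edges lie in three different maximal matchings, so $m\ge 3$ and hence $m_{\ge3}\ge 1$.

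\textbf{The case $(4,3)$.} Your premise $m_2\le c$ is wrong. With $C_4=uv_1v_2v_3u$, every pendant edge $v_2\ell$ at the vertex opposite $u$ yields two size-$2$ maximal matchings, $\{v_2\ell,uw_1\}$ and $\{v_2\ell,uw_2\}$. With all $k$ pendant edges at $v_2$ one gets $m_2=2k+4$ and $m=4k+6$ (the first row of the paper's table). So the estimate ``$m_2\le c$, $m\ge k+c'$'' cannot carry the argument.

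\textbf{The fallback.} It points the wrong way. The inequality $m_{\ge3}\ge 2m_2$ already fails for the bare $\mathcal{B}(4,3)$ core, where $m_2=4$, $m_3=2$ and $avm=7/3$ exactly; for the same reason your $(1+\varepsilon)$ condition is false there. The right target is $2m_{\ge3}\ge m_2$ with equality allowed, which suffices precisely because $avm(G^2)<7/3$ strictly.

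For $(4,3)$ this follows by a pairing. A short check shows that a size-$2$ maximal matching is either one of the four core matchings $\{v_1v_2,uw_i\}$, $\{v_2v_3,uw_i\}$, or $\{v_2z,uw_i\}$ with $z$ a non-core neighbour of $v_2$.
\begin{itemize}
\item Pair the first kind with maximal extensions of $\{uv_1,v_2v_3,w_1w_2\}$ and $\{uv_3,v_1v_2,w_1w_2\}$.
\item Pair the second kind with maximal extensions of $\{v_2z,uv_1\}$ and $\{v_2z,uv_3\}$. These have size at least $3$ because $w_1w_2$ is still undominated.
\end{itemize}
All these extensions are distinct, since they use different edges at $v_2$ or at $u$. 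Letting $t$ be the number of non-core neighbours of $v_2$, this gives $m_2\le 4+2t\le 2(2+2t)\le 2m_{\ge3}$.

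\textbf{Non-core support vertices.} Here $z$ must range over all non-core neighbours of $v_2$, not only leaves. Your assertion that branches through non-core support vertices destroy every size-$2$ candidate fails at $v_2$. For the path $v_2xy$ attached to the $\mathcal{B}(4,3)$ core, $\{v_2x,uw_1\}$ is a maximal matching of size $2$. The paper's proof asserts the same thing ($m_2(G)=0$ whenever some support vertex is off the core), so do not import it from there.
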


\begin{proof}
By symmetry, we may assume that \(p\ge q\). Since Lemma~\ref{lem:no-size1}
shows that no graph in \(\mathcal{B}(p,q)\) has a maximal matching of size \(1\),
we have
\[
m'(G)\ge 2m_2(G)+3\bigl(m(G)-m_2(G)\bigr)
=3m(G)-m_2(G).
\]
Hence
\[
avm(G)
=
\frac{m'(G)}{m(G)}
\ge
3-\frac{m_2(G)}{m(G)}.
\]
Therefore, throughout the proof, it is enough to control the possible
maximal matchings of size \(2\).

If \(G\) contains a support vertex not lying on the core, then no maximal
matching of size \(2\) exists. Indeed, if such a matching contained an
edge outside the core, then the remaining single edge could not dominate
the whole bicyclic core. If it contained only core edges, then it could
not dominate the pendant edge incident with the support vertex outside
the core. Thus \(m_2(G)=0\), and so
\[
avm(G)\ge 3>avm(G^2).
\]
Hence, in the rest of the proof, we may assume that all pendant edges
are attached directly to core vertices.

\noindent\textbf{Case 1.} \(p\ge4\) and \(q\ge4\).

In this case no maximal matching of size \(2\) exists. To see this,
suppose that \(M\) is a maximal matching of size \(2\). Since each of
the two cycles has length at least \(4\), one edge cannot dominate an
entire cycle. Thus the two edges of \(M\) would have to dominate both
cycles simultaneously. However, an edge lying on one cycle can only help
to dominate the other cycle if it is incident with the common vertex
\(u\), and even then it cannot dominate all edges of a cycle of length
at least \(4\). Hence at least one edge of the core remains undominated,
contradicting the maximality of \(M\). Therefore \(m_2(G)=0\), and
\[
avm(G)\ge 3>avm(G^2).
\]

\medskip
\noindent\textbf{Case 2.} \(p=4\) and \(q=3\).

\begin{figure}[h]
\begin{tikzpicture}[scale=0.8]
\node[vertex] (c) at (0,0) [label=below:$u$] {};
\node[vertex] (l1) at (-1, 0.6) [label=above:$v_1$]{};
\node[vertex] (l2) at (-2, 0) [label=below:$v_2$]{};
\node[vertex] (l3) at (-1, -0.6) [label=below:$v_3$]{};

\node[vertex] (r1) at (1, 0.6) [label=above:$w_1$]{};
\node[vertex] (r2) at (1, -0.6) [label=below:$w_2$]{};

\draw[edge] (c) -- (l1) -- (l2) -- (l3)--(c);
\draw[edge] (c) -- (r1) -- (r2) -- (c);

\end{tikzpicture}
\caption{The core of $\mathcal{B}(4, 3).$}$\label{R34}$
\end{figure}
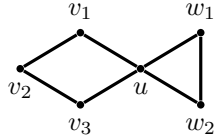

Let $k=n-6$ be the number of pendant edges of the present graph \(G\). If \(k=0\), then \(G\) is exactly the core of \(B(4,3)\). A direct
enumeration gives four maximal matchings of size \(2\) and two maximal
matchings of size \(3\). Hence $m(G)=6, m'(G)=4\cdot 2+2\cdot 3=14$,
and so $avm(G)=\frac{14}{6}=\frac{7}{3}$.

For the same order \(n=6\), the graph \(G^2=R_6(3,3)\) has one pendant
edge, and therefore $avm(G^2)=3-\frac{6}{7}=\frac{15}{7}$. Thus $avm(G)=\frac{7}{3}>\frac{15}{7}
=avm(G^2)$.

Hence, in what follows, we may assume that \(k\ge 1\). Notice that
the same-order graph \(G^2=R_n(3,3)\) has
\[
K=n-5=k+1
\]
pendant edges. Hence
\[
avm(G^2)
=
3-\frac{2K+4}{3K+4}
=
3-\frac{2k+6}{3k+7}.
\]

The core of \(\mathcal{B}(4,3)\) has exactly four maximal matchings of size \(2\):
\[
\begin{aligned}
M_1&=\{(v_1,v_2), (u,w_1)\},&
M_2&=\{(v_1,v_2), (u,w_2)\},\\
M_3&=\{(v_2,v_3),(u,w_1)\},&
M_4&=\{(v_2,v_3), (u,w_2)\}.
\end{aligned}
\]
In addition, a maximal matching of size \(2\) containing a pendant edge
can occur only when the pendant edge is incident with \(v_2\), in which
case the other edge must be either \((u,w_1)\) or \((u, w_2)\).

First suppose that all \(k\) pendant edges are attached to a single core
vertex. Up to symmetry, there are four possibilities. Direct enumeration
of maximal matchings gives the following table:
\[
\begin{array}{c|c|c|c}
\text{attachment vertex} & m_2(G) & m(G) &
avm(G) \\ \hline
v_2 & 2k+4 & 4k+6 &
3-\dfrac{2k+4}{4k+6} \\[2mm]
u & 4 & 2k+6 &
3-\dfrac{4}{2k+6} \\[2mm]
w_1\ \text{or}\ w_2 & 2 & 4k+4 &
3-\dfrac{2}{4k+4} \\[2mm]
v_1\ \text{or}\ v_3 & 2 & 4k+4 &
3-\dfrac{2}{4k+4}
\end{array}
\]
For each row, we compare the subtracted fraction with
\[
\frac{2k+6}{3k+7}.
\]
Indeed,
\[
\frac{2k+4}{4k+6}
<
\frac{2k+6}{3k+7},
\]
because
\[
(2k+6)(4k+6)-(2k+4)(3k+7)
=
2k^2+10k+8>0.
\]
Also,
\[
\frac{4}{2k+6}
<
\frac{2k+6}{3k+7},
\]
because
\[
(2k+6)(2k+6)-4(3k+7)
=
4k^2+12k+8>0.
\]
Finally,
\[
\frac{2}{4k+4}
<
\frac{2k+6}{3k+7}
\]
is immediate for every \(k\ge1\). Therefore, in all single-attachment
configurations,
\[
avm(G)>avm(G^2).
\]

It remains to consider the case where pendant edges are attached to at
least two core vertices. Let \(X\) be the set of core vertices incident
with pendant edges. The following estimates are obtained by direct
inspection of the \(\mathcal{B}(4,3)\) core. They distinguish whether \(v_2\), the
only vertex that can be incident with a pendant edge in a size-\(2\)
maximal matching, belongs to \(X\):
\[
\begin{array}{c|c|c}
\text{condition on }X & \text{upper bound for }m_2(G)
& \text{lower bound for }m(G) \\ \hline
v_2\notin X & m_2(G)\le 4 & m(G)\ge 2k+4 \\[1mm]
X=\{u,v_2\} & m_2(G)=2b+4
& m(G)=(a+4)b+2a+6 \\[1mm]
v_2\in X,\ X\not\subseteq\{u,v_2\}
& m_2(G)\le k+2 & m(G)\ge 3k
\end{array}
\]
In the second row, \(a\ge1\) and \(b\ge1\) denote the numbers of pendant
edges attached to \(u\) and \(v_2\), respectively, so that \(k=a+b\).

If \(v_2\notin X\), then
\[
avm(G)
\ge
3-\frac{4}{2k+4}.
\]
Since
\[
\frac{4}{2k+4}
<
\frac{2k+6}{3k+7},
\]
we obtain
\[
avm(G)>avm(G^2).
\]

If \(X=\{u,v_2\}\), then
\[
avm(G)
\ge
3-\frac{2b+4}{(a+4)b+2a+6}.
\]
Since \(k=a+b\), it is enough to prove
\[
\frac{2b+4}{(a+4)b+2a+6}
<
\frac{2a+2b+6}{3a+3b+7}.
\]
After cross-multiplication, the difference between the right-hand side
numerator and the left-hand side numerator is
\[
\begin{aligned}
&(2a+2b+6)\bigl((a+4)b+2a+6\bigr)
-(2b+4)(3a+3b+7)\\
& =
2a^2b+4a^2+2ab^2+12ab+12a+2b^2+10b+8>0.
\end{aligned}
\]
Thus
\[
avm(G)>avm(G^2).
\]

Finally, suppose that \(v_2\in X\) and \(X\not\subseteq\{u,v_2\}\).
Then
\[
avm(G)
\ge
3-\frac{k+2}{3k}.
\]
Since \(k\ge2\) in this dispersed case, we have $\frac{k+2}{3k}<\frac{2k+6}{3k+7}$, because this is equivalent to $3k^2+5k-14>0$, which holds for all \(k\ge2\). Hence
\[
avm(G)>avm(G^2).
\]
Therefore the conclusion holds for \(\mathcal{B}(4,3)\).

\medskip
\noindent\textbf{Case 3.} \(p=5\) and \(q=3\).

\begin{figure}[h]
\begin{tikzpicture}[scale=0.8]
\node[vertex] (c) at (0,0) [label=below:$u$] {};
\node[vertex] (l1) at (-1, 0.6) [label=above:$v_1$]{};
\node[vertex] (l2) at (-2, 0) [label=left:$v_2$]{};
\node[vertex] (l3) at (-1.5, -0.6) [label=below:$v_3$]{};
\node[vertex] (l4) at (-0.5, -0.6) [label=below:$v_4$]{};

\node[vertex] (r1) at (1, 0.6) [label=above:$w_1$]{};
\node[vertex] (r2) at (1, -0.6) [label=below:$w_2$]{};

\draw[edge] (c) -- (l1) -- (l2) -- (l3)--(l4)--(c);
\draw[edge] (c) -- (r1) -- (r2) -- (c);

\end{tikzpicture}
\caption{The core of $\mathcal{B}(5, 3).$}$\label{R34}$
\end{figure}
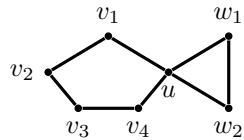

Let $k=n-7$ be the number of pendant edges of \(G\). If \(k=0\), then \(G\) is exactly the core of \(\mathcal{B}(5,3)\). A direct
enumeration gives $m(G)=9, m'(G)=25$, and hence $avm(G)=\frac{25}{9}$.
For the same order \(n=7\), the graph \(G^2=R_7(3,3)\) has two pendant
edges, and $avm(G^2)=\frac{22}{10}=\frac{11}{5}$. Thus $avm(G)=\frac{25}{9}>\frac{11}{5}=avm(G^2)$.
Hence, in what follows, we may assume that \(k\ge1\). Then the same-order graph
\(G^2=R_n(3,3)\) has
\[
K=n-5=k+2
\]
pendant edges, and therefore
\[
avm(G^2)
=
3-\frac{2K+4}{3K+4}
=
3-\frac{2k+8}{3k+10}.
\]

We first determine the maximal matchings of size \(2\). If such a
matching contains the edge \((w_1, w_2)\), then its other edge would have to
dominate the whole \(5\)-cycle, which is impossible. Hence a size-\(2\)
maximal matching must contain either \((u, w_1)\) or \((u, w_2)\). Such an edge
dominates the two edges of the \(5\)-cycle incident with \(u\). The
remaining three consecutive edges $(v_1, v_2)$, $(v_2,v_3)$, $(v_3, v_4)$ can be dominated by exactly one edge, namely \((v_2, v_3)\). Thus the only
possible size-\(2\) core maximal matchings are
\[
N_1=\{(u,w_1),(v_2,v_3)\},
\qquad
N_2=\{(u,w_2),(v_2,v_3)\}.
\]
Moreover, no maximal matching of size \(2\) can contain a pendant edge,
because after choosing a pendant edge, the remaining single edge cannot
dominate the whole \(5\)-cycle together with the triangle. Hence $m_2(G)\le 2$. By direct enumeration of the possible extensions in this core, we have $m(G)\ge k+2$.

Consequently, $avm(G)\ge3-\frac{2}{k+2}$. Since $\frac{2}{k+2}<\frac{2k+8}{3k+10}$, which is equivalent to $2k^2+6k-4>0$, we obtain, for every \(k\ge1\),
\[
avm(G)>
3-\frac{2k+8}{3k+10}
=
avm(G^2).
\]

\medskip
\noindent\textbf{Case 4.} \(p\ge6\) and \(q=3\).

Let the \(p\)-cycle be
\[
uv_1v_2\cdots v_{p-1}u
\]
and let the triangle be \(uw_1w_2u\). We show that no maximal matching
of size \(2\) exists.

Indeed, a size-\(2\) maximal matching must dominate the triangle. Hence
it must contain either \((w_1,w_2)\), \((u,w_1)\), or \((u,w_2)\). If it contains
\((w_1,w_2)\), then the other edge would have to dominate the whole
\(p\)-cycle, which is impossible for \(p\ge4\). If it contains \((u,w_1)\)
or \((u,w_2)\), then this edge dominates only the two edges of the
\(p\)-cycle incident with \(u\). The other edge can dominate at most
three consecutive edges of the remaining path
\[
v_1v_2\cdots v_{p-1}.
\]
Since \(p\ge6\), at least one edge of the \(p\)-cycle remains
undominated. Thus no maximal matching of size \(2\) exists, so $m_2(G)=0$.

Therefore
\[
avm(G)\ge 3>avm(G^2).
\]

Combining Cases \(1\)--\(4\), we conclude that for every
\(G\in \mathcal{B}(p,q)\) with \(\max\{p,q\}\ge4\),
\[
avm(G)>avm(G^2).
\]
This completes the proof.
\end{proof}

\begin{lemma}\label{lem:mixed_dispersion}\label{uinvin}
Let $G\in \mathcal{B}(3, 3)$. Suppose $a \ge 1$ pendant edges are attached to $u$, and the remaining pendant edges are attached to $d$ degree-2 vertices ($1 \le d \le 4$). Then $avm(G) > avm(G^2)$.
\end{lemma}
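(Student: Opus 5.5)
We use the reduction already employed in Lemma~\ref{lem:no-c4}. By Lemma~\ref{lem:no-size1}, every maximal matching has size at least $2$, so
\[
avm(G)\ge 3-\frac{m_2(G)}{m(G)}.
\]
Writing $k=n-5=a+b_1+\cdots$ for the total number of pendant edges, we have $avm(G^2)=3-\frac{2k+4}{3k+4}$. It therefore suffices to show
\[
\frac{m_2(G)}{m(G)}<\frac{2k+4}{3k+4}.
\]
We will prove this by bounding $m_2(G)$ from above and $m(G)$ from below.

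\textbf{Step 1: upper bound on $m_2(G)$.} Since $u$ carries a leaf, no size-$2$ maximal matching can leave $u$ uncovered. A maximal matching of size $2$ must dominate both triangles. The options are:
\begin{itemize}
\item $\{(u,l_u),e\}$ with $e$ dominating the other four core edges. This is impossible, because $e$ would have to be $(v_1,v_2)$ and $(w_1,w_2)$ simultaneously.
\item One edge $(u,\cdot)$ on one triangle and the opposite edge of the other triangle. This gives the $4$ pure-core matchings $\{(u,w_i),(v_1,v_2)\}$ and $\{(u,v_i),(w_1,w_2)\}$.
\item $\{(u,w_i),(v_j,l_{v_j})\}$, which requires $v_{3-j}$ to be leafless; the symmetric pairs $\{(u,v_i),(w_j,l_{w_j})\}$ require $w_{3-j}$ to be leafless.
\end{itemize}
A leaf at $v_j$ kills the pure-core matchings $\{(u,v_{3-j}),(w_1,w_2)\}$. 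Let $b_x$ denote the number of leaves at the degree-$2$ vertex $x$. Then
\[
m_2(G)\le 4+2\sum_{x} b_x\,[\text{partner of }x\text{ leafless}]\le 4+2(k-a)\le 2k+2,
\]
with further losses whenever both vertices of a triangle carry leaves.

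\textbf{Step 2: lower bound on $m(G)$.} Every choice of a pendant edge at $u$ ($a$ ways) together with a leaf-or-core edge covering each side gives many matchings. Concretely, choosing $(u,l_u)$ forces one edge on each of the triangles $v_1v_2$ and $w_1w_2$; each side has at least $1$ choice, and at least $1+b_{v_1}+b_{v_2}$ choices when leaves are present. This yields
\[
m(G)\ge a\prod_{\text{sides}}\Bigl(1+\sum b\Bigr)+m_2(G)+(\text{size-}3\text{ matchings through }(u,v_i)\text{ or }(u,w_i)).
\]
In particular $m(G)\ge a(1+k-a)+m_2(G)+(k-a)$. Since $a\ge1$ and $k-a\ge d\ge1$, this gives $m(G)\ge k+m_2(G)+\ldots$.

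\textbf{Step 3: comparison.} We need $m_2(G)(3k+4)<(2k+4)m(G)$. Using $m(G)\ge m_2(G)+a(k-a+1)+(k-a)$ and $m_2(G)\le 4+2(k-a)$, this reduces to a polynomial inequality in $a\ge1$ and $k-a\ge1$, which we expect to follow by cross-multiplication as in Case~2 of Lemma~\ref{lem:no-c4}. The main obstacle is that the crude bounds in Steps~1 and~2 may not suffice when $a=1$ and all remaining leaves sit on a single vertex $v_1$, since then $m_2(G)$ attains its maximum $2k+2$. In that extremal subcase we would instead enumerate exactly. The matchings containing $(u,l_u)$ number $1\cdot(1+b)\cdot 1$. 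The matchings containing $(u,w_i)$ give the $2(b+1)$ size-$2$ matchings. The matchings containing $(u,v_1)$ contribute $1$. The matchings containing $(u,v_2)$ contribute $b$ of size $3$. Pure core with $(w_1w_2)$ contributes its share. We then verify the ratio directly. Finally we split the remaining configurations according to $d$ ($1\le d\le4$), using the loss terms from Step~1 when $d\ge2$.
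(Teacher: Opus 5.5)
Your framework (use $avm(G)\ge 3-m_2/m$, bound $m_2$ above and $m$ below) is sound, and Step~1 is valid as an upper bound. Since $u$ carries a leaf, a size-$2$ maximal matching is some $(u,y)$ together with an edge dominating the opposite edge of the other triangle, so $m_2(G)\le 4+2(k-a)\le 2k+2$.

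The gap is in Steps~2--3.
\begin{itemize}
\item Your claim that, after choosing $(u,l_u)$, a side carrying leaves has at least $1+b_{v_1}+b_{v_2}$ completions fails when both $v_1$ and $v_2$ carry leaves. A leaf edge at $v_1$ then forces a leaf edge at $v_2$, so there are only $1+b_{v_1}b_{v_2}$ completions.
\item Hence the bound $m(G)\ge a(1+k-a)+m_2(G)+(k-a)$ is false in general. For $a=3$, $b_{v_1}=b_{v_2}=1$, $b_{w_1}=b_{w_2}=0$ one has $m(G)=12$ and $m_2(G)=2$, while your bound asserts $m(G)\ge 13$.
\item The decisive inequality in Step~3 is never derived (``we expect to follow'').
\item The exact enumeration for $a=1$ is only sketched, and its ``pure core with $(w_1,w_2)$'' term would double-count $\{(u,v_1),(w_1,w_2)\}$.
\item The split over $d$ is only announced.
\end{itemize}
As written, the proposal does not prove the lemma.

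The missing piece is easy to supply, and it removes all case distinctions. Let $m_{\ge 3}=m(G)-m_2(G)$.
\begin{itemize}
\item Each pendant edge $(u,\ell)$ lies in a maximal matching of size at least $3$. Both $v_1v_2$ and $w_1w_2$ remain to be dominated, and no single edge meets both.
\item Each pendant edge $(x,\ell)$ at a degree-$2$ vertex $x$ works similarly. Let $x'$ be the other non-$u$ vertex of its triangle. Then $\{(u,x'),(x,\ell)\}$ extends to a maximal matching of size at least $3$, because the opposite edge of the other triangle is still undominated.
\item Any two of these $k$ matchings use different edges at $u$ or at $x$, so $m_{\ge 3}\ge k$.
\end{itemize}
The target $m_2(3k+4)<(2k+4)m$ is equivalent to $k\,m_2<(2k+4)m_{\ge 3}$. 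This follows from $k\,m_2\le k(2k+2)<k(2k+4)\le (2k+4)m_{\ge 3}$. In particular, the subcase $a=1$ with all other leaves on $v_1$ needs no special treatment; there in fact $m_2=2k+1$.

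The paper instead treats $d=4$, $d\in\{2,3\}$ and $d=1$ separately and computes the last case exactly. This completed version of your argument is uniform in $d$. It also avoids the paper's Case~2 bound $m_2\le 2$, which counts only pure-core matchings. That bound is wrong for $S=\{v_1,w_1\}$: matchings such as $\{(u,v_1),(w_1,\ell)\}$ are maximal there, and in fact $m_2=2+b_{v_1}+b_{w_1}$. Your Step~1 accounts for such mixed matchings correctly.
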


\begin{proof}
Let $S \subseteq \{v_1, v_2, w_1, w_2\}$ be the set of degree-2 vertices that have at least one leaf attached, where $d = |S|$. For a matching to be maximal and contain no pendant edges, it must cover all vertices that have leaves attached; otherwise, an uncovered pendant edge could be added. Thus, any size-2 maximal matching must cover the set $S \cup \{u\}$, which has size $d+1$. We analyze the maximum possible $m_2$ based on $d$:

\noindent\textbf{Case 1.} $d=4$. 

The set of vertices requiring coverage is $\{u, v_1, v_2, w_1, w_2\}$, which contains 5 vertices. A size-2 maximal matching can cover 4 vertices. Therefore, at least one vertex with leaves remains uncovered, forcing the inclusion of a pendant edge. Thus, no maximal matching of size 2 exists ($m_2 = 0$). Every maximal matching has size at least 3, implying $avm(G) \ge 3 > avm(G^2)$.

\noindent\textbf{Case 2.} $d=3$ or $d=2$.

For $d=3$, without loss of generality, assume $S = \{v_1, v_2, w_1\}$. The maximal matching must cover $\{u, v_1, v_2, w_1\}$. The only combination of 2 core edges covering exactly these 4 vertices is $\{(v_1, v_2), (u, w_1)\}$. (Any other combination leaves at least one of them free). Thus, $m_2 \le 1$. For $d=2$, if $S = \{v_1, v_2\}$, the maximal matching must cover $\{u, v_1, v_2\}$. The valid size-2 maximal matchings are $\{(v_1, v_2), (u, w_1)\}$ and $\{(v_1, v_2), (u, w_2)\}$. If $S = \{v_1, w_1\}$, the maximal matching must cover $\{u, v_1, w_1\}$. The valid size-2 maximal matchings are $\{(u, v_1), (w_1, w_2)\}$ and $\{(u, w_1), (v_1, v_2)\}$. In both configurations, there are at most 2 valid core maximal matchings. Thus, $m_2 \le 2$.

We have $m'(G) \ge 2m_2 + 3(m - m_2) = 3m - m_2$. Therefore, the average size is bounded by:
\[ avm(G) = \frac{m'}{m} \ge 3 - \frac{m_2}{m}. \]
For $d \in \{2, 3\}$, we have $m_2 \le 2$. Furthermore, the total number of maximal matchings $m$ includes at least the selections of one leaf from each attached vertex, ensuring $m \ge k + 1 \ge 4$.
We compare this bound to the extremal candidate $G^2$, whose average is $avm(G^2) = 3 - \frac{2k+4}{3k+4}$.
We require:
\[ 3 - \frac{2}{m} > 3 - \frac{2k+4}{3k+4} \iff \frac{2}{m} < \frac{2k+4}{3k+4} \iff 6k + 8 < m(2k+4). \]
Since $m \ge k+1$, we substitute to get $6k + 8 < (k+1)(2k+4) = 2k^2 + 6k + 4$, which simplifies to $2k^2 > 4$. Since the total leaves $k \ge d+1 \ge 3$, this strict inequality unconditionally holds. Thus, $avm(G) > avm(G^2)$ for $d=3$ or $d=2$.

\noindent\textbf{Case 3.} $d=1$.

This configuration reduces exactly to the mixed attachment scenario where leaves are strictly distributed between $u$ and a vertex $v_1$ with degree 2, specifically, $v_1$ is incident with $b$ pendant edges.
\begin{itemize}
\item \textit{Choose a pendant edge $(u,l_u)$ ($a$ choices)}. Then $u$ is covered. Then $(w_1,w_2)$ in the maximal matching. To cover the $b$ leaves on $v_1$, we may either take $(v_1,v_2)$ or a pendant edge $(v_1, l_{v_1})$. This gives $a$ maximal matchings of size $3$ (with $(v_1,v_2)$) and $ab$ maximal matchings of size $3$ (with $(v_1, l_{v_1})$). Contribution: $a+ab$ maximal matchings of size $3$.

\item \textit{Choose a pendant edge $(v_1, l_{v_1})$ but no pendant edge from $u$ ($b$ choices)}. Then $v_1$ is covered. To dominate the leaves on $u$, we must select an edge incident to $u$ (other than $(u,v_1)$ because $v_1$ is already taken). Three sub‑subcases: Matching $\{(v_1, l_{v_1}),(u,v_2),(w_1,w_2)\}$, size $3$, $b$ maximal matchings. Matching $\{(v_1,l_{v_1}),(u,w_1)\}$, size $2$, $b$ maximal matchings. Matching $\{(v_1,l_{v_1}),(u,w_2)\}$, size $2$, $b$ maximal matchings. Contribution: $b$ of size $3$ and $2b$ of size $2$.

\item \textit{Choose no pendant edge at all}. We must cover both $u$ and $v_1$ using core edges. The possibilities: $\{(u,v_1),(w_1,w_2)\}$, size $2$, $1$ maximal matching; $\{(u,w_1),(v_1,v_2)\}$, size $2$, $1$ maximal matching; $\{(u,w_2),(v_1,v_2)\}$, size $2$, $1$ maximal matching. Contribution: $3$ maximal matchings of size $2$.
\end{itemize}
Summing up:
\[
\begin{cases}
m(G) = ab + a + 3b + 3,\\[4pt]
m'(G) = 2(2b+3) + 3(ab+a+b) = 3ab + 3a + 7b + 6.
\end{cases}
\]
Hence
\[
avm(G)=\frac{3ab+3a+7b+6}{ab+a+3b+3}
=3-\frac{2b+3}{ab+a+3b+3}>avm(G^2), \text{ since }a,b\ge1.
\]

\end{proof}

\begin{lemma}\label{unoinvin}
Let $G \in \mathcal{B}(3, 3)$. Suppose $u$ has no attached pendant edges ($a=0$), and the $k$ pendant edges are attached to $d$ degree-2 vertices ($1 \le d \le 4$). If $d \ge 2$, then $avm(G) > avm(G^2)$.
\end{lemma}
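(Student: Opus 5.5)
We argue as in Lemma~\ref{lem:no-c4} and Lemma~\ref{lem:mixed_dispersion}. By Lemma~\ref{lem:no-size1} every maximal matching has size at least $2$, so
\[
avm(G)\ge 3-\frac{m_2(G)}{m(G)},
\]
and with $k=n-5$ the target is $avm(G^2)=3-\frac{2k+4}{3k+4}$. It therefore suffices to show
\[
m_2(G)\,(3k+4)<(2k+4)\,m(G).
\]
We do this by bounding $m_2$ from above, through an exact list of size-$2$ maximal matchings, and $m$ from below, by counting matchings that use pendant edges. Let $S\subseteq\{v_1,v_2,w_1,w_2\}$ be the set of leaf-carrying vertices, with $|S|=d\ge 2$. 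A size-$2$ maximal matching must cover every vertex of $S$, either by a core edge or by one of its pendant edges, and must dominate both triangles.

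\textbf{Case $d\ge 3$.} Every vertex of $S$ must be covered by only two edges. This forces very few configurations; as in Case~2 of Lemma~\ref{lem:mixed_dispersion}, there are at most $O(1)$ core choices plus at most one pendant choice per vertex. This gives a linear bound such as $m_2\le k+2$. On the other hand, choosing one pendant edge at each of two distinct vertices of $S$ and completing gives $m\ge ab$-type products, quadratic in $k$. The inequality then follows immediately. For $d=4$ the vertices to cover are too many in the worst case, and I expect $m_2$ to be tiny.

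\textbf{Case $d=2$, $S$ inside one triangle, say $S=\{v_1,v_2\}$, with $a,b\ge1$ leaves.} A pendant edge at $v_1$ leaves $v_2$ to be covered by $(u,v_2)$, and then $(w_1,w_2)$ is undominated. Hence the only size-$2$ maximal matchings are $\{(v_1,v_2),(u,w_i)\}$, so $m_2=2$. Choosing pendant edges at both $v_1$ and $v_2$ and completing on the $w$-triangle in $3$ ways gives $m\ge 3ab\ge 3(k-1)$. The required inequality reduces to $2(3k+4)<3(k-1)(2k+4)$, which holds for $k\ge2$.

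\textbf{Case $d=2$, $S$ split between the triangles, say $S=\{v_1,w_1\}$, with $a$ and $b$ leaves.} I would enumerate all maximal matchings directly.
\begin{itemize}
\item With both pendant types, $u$ is matched to $v_2$ or $w_2$: $2ab$ matchings of size $3$.
\item With a pendant at $v_1$ only, $w_1$ is covered by $(u,w_1)$ (size $2$) or by $(w_1,w_2)$ together with $(u,v_2)$ (size $3$). Symmetrically for a pendant at $w_1$ only.
\item Pure core: $\{(u,v_1),(w_1,w_2)\}$, $\{(u,w_1),(v_1,v_2)\}$, $\{(v_1,v_2),(w_1,w_2)\}$, all of size $2$.
\end{itemize}
Thus $m_2=k+3$ and $m=2ab+2k+3$. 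Using $ab\ge k-1$, the inequality becomes $(k+3)(3k+4)<(2k+4)(4k+1)$, which is clearly true.

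The main obstacle is the $d\ge3$ case analysis. There one must carefully check that mixed size-$2$ matchings, consisting of one pendant edge and one core edge, are few. I would handle this by noting that a pendant edge at $x\in S$ leaves at least two other vertices of $S$ (or one plus a whole triangle) to be handled by a single core edge. This is possible only if those vertices are adjacent and that edge also dominates the other triangle, which rules out almost everything.
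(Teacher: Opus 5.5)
Your framework is the paper's: you reduce to $m_2(G)(3k+4)<(2k+4)\,m(G)$ via $avm(G)\ge 3-m_2/m$. Your split case $S=\{v_1,w_1\}$ is correct, and in fact more careful than the paper. The paper's blanket bound $m_2\le 3$ for $d\ge 2$ overlooks the $a+b$ size-$2$ maximal matchings $\{(v_1,\ell),(u,w_1)\}$ and $\{(w_1,\ell'),(u,v_1)\}$. Your exact values $m_2=k+3$ and $m=2ab+2k+3$ are what that subcase really needs.

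\textbf{The case $d\ge 3$ is not proved.} Your lower bound rests on $m$ being ``quadratic in $k$'', which is false. For $S=\{v_1,v_2,w_1\}$ with $1,1,k-2$ leaves, the pairwise products sum to only $2k-3$, and a full count gives $m=4k-2$. So from $m_2\le k+2$ (which you also do not derive) nothing follows immediately; you would need $m>(3k+4)/2$, which you never show.

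The missing observation is that for $d\ge 3$ no size-$2$ maximal matching contains a pendant edge. The other edge would have to cover at least two further vertices of $S$; for $d=4$ it would need three, which is impossible. So for $d=3$ it is $(v_1,v_2)$ or $(w_1,w_2)$, and then an edge of the other triangle at $u$ is left undominated. Hence $m_2\le 2$ (exactly $2$ for $d=3$, $1$ for $d=4$). Then $2(3k+4)<(2k+4)m$ holds once $m\ge 3$, which is immediate. Assuming without loss of generality $v_1,v_2\in S$, the matchings $\{(v_1,v_2),(w_1,w_2)\}$, $\{(v_1,\ell),(u,v_2),(w_1,w_2)\}$ and $\{(v_2,\ell'),(u,v_1),(w_1,w_2)\}$ are maximal and distinct.

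\textbf{The case $S=\{v_1,v_2\}$ undercounts $m_2$.} You missed the maximal matching $\{(v_1,v_2),(w_1,w_2)\}$: the vertex $u$ is uncovered, but all its neighbours are covered. So $m_2=3$, as in the paper, not $2$. With this value your estimate $m\ge 3ab\ge 3(k-1)$ is too weak at $k=2$: you would need $30=3(3k+4)<3(k-1)(2k+4)=24$, which is false.

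The repair is to count all maximal matchings. Besides the $3ab$ matchings with two pendant edges, there are the $a+b$ matchings $\{(v_1,\ell),(u,v_2),(w_1,w_2)\}$ and $\{(v_2,\ell'),(u,v_1),(w_1,w_2)\}$, plus the three core ones. So $m=3ab+k+3$ and $avm(G)=3-\frac{3}{3ab+k+3}$. This exceeds $avm(G^2)$ because $ab\ge k-1$ gives $3(3k+4)<(2k+4)\cdot 4k\le(2k+4)(3ab+k+3)$ for $k\ge 2$.
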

\begin{proof}
Let $S \subseteq \{v_1, v_2, w_1, w_2\}$ be the set of degree-2 vertices that have at least one pendant edge attached, meaning $|S| = d$. We aim to bound the maximum number of size-2 maximal matchings. In the core of $\mathcal{B}(3, 3)$, there are exactly 5 maximal matchings of size 2:
\begin{itemize}
    \item $M_1 = \{(v_1, v_2), (w_1, w_2)\}$, 
    \item $M_2 = \{(u, v_1), (w_1, w_2)\}$, 
    \item $M_3 = \{(u, v_2), (w_1, w_2)\}$, 
    \item $M_4 = \{(v_1, v_2), (u, w_1)\}$, 
    \item $M_5 = \{(v_1, v_2), (u, w_2)\}$. 
\end{itemize}

For a pure core maximal matching to be maximal in $G$ (i.e., containing no pendant edges), it is strictly required to cover all vertices in $S$; otherwise, an uncovered pendant edge could be legally added, violating maximality. We filter these 5 maximal matchings based on their intersection with $S$:

If $d=4$. then we have $S = \{v_1, v_2, w_1, w_2\}$. Only $M_1$ successfully covers all 4 vertices in $S$. Thus, $m_2 = 1$. 

If $d=3$. Without loss of generality, let $S = \{v_1, v_2, w_1\}$. The maximal matchings covering $S$ are $M_1$ and $M_4$. Thus, $m_2 = 2$.

If $d=2$.
If $S = \{v_1, v_2\}$, they are covered by $M_1, M_4$, and $M_5$, $m_2 = 3$. If $S = \{v_1, w_1\}$, they are covered by $M_1, M_2$, and $M_4$, $m_2 = 3$. 

In all dispersion configurations ($d \ge 2$), we universally have $m_2 \le 3$. The total size is $m' \ge 2m_2 + 3(m - m_2) = 3m - m_2$. Thus, the average size is bounded by:
\[ avm(G) = \frac{M}{m} \ge 3 - \frac{m_2}{m} \ge 3 - \frac{3}{m}. \]

We now establish a loose lower bound for $m$. Since $d \ge 2$, the $k$ leaves are partitioned into at least two non-empty sets. Selecting any single pendant edge guarantees a valid maximal matching (at least $k$ maximal matchings). Selecting a pair of pendant edges from different vertices in $S$ guarantees a maximal matching of size $\ge 3$. Even in the minimal dispersion where $b=1$ and $c=k-1$, the number of leaf-inclusive maximal matchings is at least $(1)(k-1) + k = 2k-1$. Including the pure core maximal matchings ($m_2 \ge 1$), we safely have $m \ge 2k$. To prove $avm(G) > avm(G^2) = 3 - \frac{2k+4}{3k+4}$, it suffices to show:
\[ 3 - \frac{3}{m} > 3 - \frac{2k+4}{3k+4} \iff \frac{3}{m} < \frac{2k+4}{3k+4} \iff 9k + 12 < m(2k+4). \]
Substituting our bound $m \ge 2k$:
\[ m(2k+4) \ge 2k(2k+4) = 4k^2 + 8k. \]
We require $9k + 12 < 4k^2 + 8k$, which simplifies to $4k^2 - k - 12 > 0$.
Since $d \ge 2$, we must have $k \ge 2$ pendant edges. For the boundary case $k=2$, we get $4(4) - 2 - 12 = 2 > 0$. The strict inequality holds universally for all $k \ge 2$.


Therefore, any dispersion of pendant edges among multiple degree-2 vertices strictly increases the average size of maximal matchings compared to $G^2$.
\end{proof}

\begin{theorem}\label{thm:B}
For any graph $G \in \mathcal{B}(p, q)$ and $G\ncong G^2$, $avm (G) > avm(G^2)$.
\end{theorem}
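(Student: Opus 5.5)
The plan is to assemble Theorem~\ref{thm:B} from the preceding lemmas through a case split on the cycle lengths and on where the pendant edges sit. Lemma~\ref{lem:no-c4} already gives $avm(G)>avm(G^2)$ whenever $\max\{p,q\}\ge 4$. So we may assume $p=q=3$, i.e.\ $G\in\mathcal{B}(3,3)$ with core $u,v_1,v_2,w_1,w_2$ as in \cref{bpq}(B), and $k=n-5\ge0$ further edges.

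\textbf{Step 1: non-core support vertices.} First we dispose of the case where some support vertex does not lie on the core. Then the argument used in Lemma~\ref{lem:no-c4} applies verbatim. A size-$2$ maximal matching cannot contain a non-core edge, since one remaining edge cannot dominate the bowtie. It also cannot consist of core edges only, since the pendant edge at the non-core support vertex would be undominated. Together with Lemma~\ref{lem:no-size1}, every maximal matching then has size $\ge3$, so $avm(G)\ge3>avm(G^2)$. The same reasoning covers trees hanging off the core that are not stars of pendant edges. More precisely, any vertex at distance $\ge2$ from the core forces some path whose edges a size-$2$ matching cannot dominate. Hence we may assume that all $k$ non-core edges are pendant edges attached directly to core vertices.

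\textbf{Step 2: the case $k=0$.} This is the bare core $B(3,3)$ with $n=5$. A direct count gives the five size-$2$ maximal matchings $M_1,\dots,M_5$ listed in Lemma~\ref{unoinvin}, plus the matchings $\{(u,v_i),(w_1,w_2)\}$ and their symmetric versions. Altogether the core has $m=5$ and $avm=2$. On the other hand, $R_5(3,3)$ would require $k=0$ pendant edges placed at a degree-$2$ vertex, which is the core itself. So for $n=5$ we must interpret $G^2$ as the core and note that equality is excluded by the hypothesis $G\ncong G^2$. In fact the lemma on $R_n(3,3)$ assumes $k\ge1$, so this boundary case reduces to noting that $\mathcal{B}(3,3)$ contains a single graph of order $5$.

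\textbf{Step 3: distribution of the pendant edges.} Let $a$ be the number of pendant edges at $u$ and $d$ the number of degree-$2$ core vertices carrying leaves.
\begin{enumerate}
\item If $a\ge1$ and $d\ge1$, apply Lemma~\ref{lem:mixed_dispersion}.
\item If $a=0$ and $d\ge2$, apply Lemma~\ref{unoinvin}.
\item If $a=0$ and $d=1$, all leaves sit on a single degree-$2$ vertex. By the symmetry of the bowtie this is exactly $R_n(3,3)=G^2$, which is excluded.
\item If $d=0$ and $a=k\ge1$, all leaves hang at the centre $u$. This case is not covered by an earlier lemma and must be computed directly.
\end{enumerate}

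\textbf{Step 4: all leaves at $u$.} Every maximal matching either contains one of the $k$ pendant edges at $u$ or covers $u$ by a core edge. In the first case the two remaining edges $v_1v_2$ and $w_1w_2$ are forced, giving $k$ matchings of size $3$. In the second case $u$ must be covered, so $M_1$ is excluded. This leaves $M_2,\dots,M_5$, four matchings of size $2$. Hence
\[
avm(G)=\frac{3k+8}{k+4}=3-\frac{4}{k+4}.
\]
It remains to check
\[
\frac{4}{k+4}<\frac{2k+4}{3k+4},
\]
which is equivalent to $12k+16<2k^2+12k+16$ and so holds for all $k\ge1$.

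The main obstacle is making sure the case split is exhaustive. The $d=0$ case is the one the earlier lemmas miss, and the degenerate $n=5$ situation needs separate care.
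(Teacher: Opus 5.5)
\textbf{Verdict: there is a genuine gap in Step~1.} Your case split is otherwise the same as the paper's: Lemma~\ref{lem:no-c4} for $\max\{p,q\}\ge4$, Lemmas~\ref{uinvin} and~\ref{unoinvin} for dispersed leaves, and the direct count $avm(G)=3-\frac{4}{k+4}$ when all leaves sit at $u$. Steps~2--4 are correct, and your use of $k\ge1$ in the final comparison is more careful than the paper's ``$n\ge5$'', which gives equality at $n=5$.

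\textbf{Why Step~1 fails.} You argue that a size-$2$ maximal matching containing a non-core edge would need ``one remaining edge'' to dominate the bowtie. This ignores that the non-core edge may itself be incident with a core vertex. Indeed, $G^2$ itself has the size-$2$ maximal matchings $\{(u,w_1),(v_1,l_{v_1})\}$. The conclusion also fails when there is a non-core support vertex. Let $G$ be the bowtie with a pendant path $v_1s\ell$ attached ($n=7$). Then $\{(v_1,s),(u,w_1)\}$ is a maximal matching of size $2$, since the uncovered vertices $v_2,w_2,\ell$ are pairwise non-adjacent. A full count gives $m(G)=8$ and $m'(G)=22$, so $avm(G)=\frac{11}{4}<3$. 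Hence the bound $avm(G)\ge3$ is not available. Your extension to trees hanging off the core fails for the same reason, since $\ell$ is at distance $2$ from the core. The paper's proof of this theorem, and of Lemma~\ref{lem:no-c4}, makes the same assertion. So this is a shared error rather than a deviation, but as written the case is unproved.

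\textbf{A short repair.} Let $s$ be a non-core support vertex with leaf $\ell$, and let $M$ be a size-$2$ maximal matching. The edges $v_1v_2$ and $w_1w_2$ must be dominated, and no edge meets both $\{v_1,v_2\}$ and $\{w_1,w_2\}$. So one edge of $M$ meets $\{v_1,v_2\}$ and the other meets $\{w_1,w_2\}$. Since $M$ must also contain an edge at $s$, that edge is $(s,c)$ for a degree-$2$ core vertex $c$, say $c=v_1$. This $c$ is unique, because $s$ cannot have two core neighbours without creating a third cycle. The other edge must meet $\{w_1,w_2\}$ and dominate $(u,v_2)$, so it is $(u,w_1)$ or $(u,w_2)$. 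Hence $m_2(G)\le2$, and $m_2(G)=0$ if there is a second non-core support vertex. Any maximal matching containing $(s,\ell)$ is not of size $2$, so $m(G)\ge m_2(G)+1$. Therefore $m_2(G)/m(G)\le\frac23<\frac{2k+4}{3k+4}$. Together with Lemma~\ref{lem:no-size1}, this gives $avm(G)\ge 3-\frac{m_2(G)}{m(G)}>avm(G^2)$. With this in place of Step~1, the rest of your argument goes through.
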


\begin{proof}
If $G$ contains a support vertex that is not on the core of $\mathcal{B}_{p,q}$, then every maximal matching has size at least 3. By Lemmas \ref{lem:no-c4}, \ref{uinvin}, \ref{unoinvin}, we only need to prove that $G \in \mathcal{B}(3, 3)$ and all pendant edges are attached to $u$.


Pendant edges $(u, l_u)$ are chosen. There are $k$ choices. The remaining edges are $(v_1,v_2)$ and $(w_1,w_2)$, which must both be added to the maximal matching. Hence each such choice yields the maximal matching $\{(u,l_u), (v_1,v_2), (w_1,w_2)\}$ of size $3$. Contribution: $k$ maximal matchings of size $3$.

If $(u,v_1)$ is chosen, then $u$ is covered, dominating all pendant edges and edges incident to $u$; only $(w_1,w_2)$ remains free and must be selected. This yields $\{(u,v_1),(w_1,w_2)\}$ of size $2$. Similarly $(u,v_2)$ gives $\{(u,v_2),(w_1,w_2)\}$ of size $2$, $(u,w_1)$ gives $\{(u,w_1),(v_1,v_2)\}$, and $(u,w_2)$ gives $\{(u,w_2),(v_1,v_2)\}$. Contribution: $4$ maximal matchings of size $2$.

No other maximal matchings exist.
\[
m(G)=k+2+2=k+4=n-1,\qquad
m'(G)=3k+4\cdot2=3k+8=3n-7.
\]
Hence
\[
avm(G)=\frac{3n-7}{n-1}=3-\frac{4}{n-1}.
\]
Comparing with $avm(G^2)$ for $k=n-5$: For $n\ge5$, one checks $\frac{4}{n-1}<\frac{2n-6}{3n-11}$, thus $avm(G)>avm(G^2)$.

\end{proof}

\section{The graph with the smallest average size of maximal matchings in $\mathcal{C}(p, q)$}

In this section, we will find the $(n, n+1)$-graph with the smallest average size of maximal matchings in $\mathcal{C}(p, q)$. Let $T_n^r(p, q)$ be the $(n, n+1)$-graph obtained by connecting $C_p$ and $C_q$ by a path of length $r$ and the other $k=n+1-p-q-r\ge1$ edges are all attached to the common vertex of the path and $C_p$ (see \cref{cpq}(A)).

\begin{figure}[h]
\subcaptionbox{$T_n^r(p, q)$}[0.3\textwidth]{
\begin{tikzpicture}[scale=0.8]
\node[vertex] (v3) at (-0.7,0)  {};
\node[vertex] (v6) at (-0.2,0)  {};
\node[vertex] (v7) at (1.2,0)  {};
\draw[thick] (v3)--(0.1,0);
\node[vertex] (v1) at (-1.5, 1)  {};
\node[vertex] (v2) at (-1.5, -1) {};
\node[vertex] (v4) at (-1.5, 0.6)  {};
\node[vertex] (v5) at (-1.5, -0.6) {};
\draw[thick] (-1.5, 0.6)--(-1.5, 0.3);
\draw[thick] (-1.5, -0.6)--(-1.5, -0.3);
\node[vertex] (w1) at (1.7,0)  {};
\node[vertex] (w2) at (2.5, 1) {};
\node[vertex] (w3) at (2.5, -1) {};
\node[vertex] (w4) at (2.5, 0.6) {};
\node[vertex] (w5) at (2.5, -0.6){};
\node[vertex]  at (0, -1) {};
\node[vertex]  at (-1, -1){};
\draw[thick] (v3)--(0, -1);
\draw[thick]  (v3)--(-1, -1);
\draw[thick] (2.5, 0.6)--(2.5, 0.3);
\draw[thick] (2.5, -0.6)--(2.5, -0.3);
\draw[thick] (0.9,0)--(w1);
\draw[edge] (v3) -- (v1) -- (v4);
\draw[edge] (v3) -- (v2) -- (v5);
\draw[edge] (w1) -- (w2) -- (w4);
\draw[edge] (w1) -- (w3) -- (w5);
\node at (0.5,0) {$\cdots$};
\node at (-0.5,-1) {$\cdots$};
\node at (-1.5,0.1) {$\vdots$};
\node at (2.5,0.1) {$\vdots$};
\end{tikzpicture}
}
\hspace*{1em}
\subcaptionbox{$T_n^1(3, 3)$}[0.3\textwidth]{
\begin{tikzpicture}[scale=0.8]
\node[vertex] (v3) at (-0.5,0) [label=above:$v_3$] {};
\node[vertex] (v1) at (-1.5, 0.6) [label=above:$v_1$] {};
\node[vertex] (v2) at (-1.5, -0.6)[label=below:$v_2$] {};
\node[vertex] (w1) at (1.5,0) [label=above:$w_1$] {};
\node[vertex] (w2) at (2.5, 0.6)[label=above:$w_2$] {};
\node[vertex] (w3) at (2.5, -0.6)[label=below:$w_3$] {};
\draw[edge] (v3) -- (v1) -- (v2) -- (v3);
\draw[edge] (v3) --  (w1);
\draw[edge] (w1) -- (w2) -- (w3) -- (w1);
\node[vertex] (a3) at (-1,-0.6)  {};
\node[vertex] (a1) at (0.1, -0.6)  {};
\node(a2) at (-0.4, -0.3)[label=below:$\dots$] {};
\draw[edge] (v3) --  (a1);
\draw[edge] (v3) -- (a3);
\end{tikzpicture}
}
\hspace*{1em}
\subcaptionbox{The core of $T_n^1(3, 3)$}[0.3\textwidth]{
\begin{tikzpicture}[scale=0.8]
\node[vertex] (v3) at (-0.5,0) [label=above:$v_3$] {};
\node[vertex] (v1) at (-1.5, 0.6) [label=above:$v_1$] {};
\node[vertex] (v2) at (-1.5, -0.6)[label=below:$v_2$] {};
\node[vertex] (w1) at (1.5,0) [label=above:$w_1$] {};
\node[vertex] (w2) at (2.5, 0.6)[label=above:$w_2$] {};
\node[vertex] (w3) at (2.5, -0.6)[label=below:$w_3$] {};
\draw[edge] (v3) -- (v1) -- (v2) -- (v3);
\draw[edge] (v3) --  (w1);
\draw[edge] (w1) -- (w2) -- (w3) -- (w1);
\end{tikzpicture}
}
\caption{}\label{cpq}
\end{figure}

\begin{lemma}\label{avmg3}
Let \(G^3=T_n^1(3,3)\) with \(k=n-6\ge1\) pendant edges.
Then
\[
avm(G^3)=3-\frac{6}{3n-11}.
\]
\end{lemma}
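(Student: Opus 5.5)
The plan is to enumerate the maximal matchings of $G^3=T_n^1(3,3)$ directly, organized by how the vertex $v_3$ is covered. We label the graph as in \cref{cpq}(B): triangles $v_1v_2v_3$ and $w_1w_2w_3$, the bridge $(v_3,w_1)$, and $k=n-6$ leaves attached to $v_3$. Since $k\ge1$, every maximal matching $M$ must cover $v_3$; otherwise a pendant edge at $v_3$ could be added to $M$. So we split according to which edge of $M$ covers $v_3$. The candidates are a pendant edge $(v_3,l_{v_3})$, the bridge $(v_3,w_1)$, or a triangle edge $(v_3,v_1)$ or $(v_3,v_2)$.

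\textbf{Case 1: a pendant edge $(v_3,l_{v_3})$ is chosen} ($k$ choices). The edge $(v_1,v_2)$ is still undominated, and it is the only remaining edge on $\{v_1,v_2\}$, so it is forced into $M$. The bridge is now dominated. The triangle $w_1w_2w_3$ needs exactly one of its three edges. This gives $3k$ maximal matchings, each of size $3$.

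\textbf{Case 2: the bridge $(v_3,w_1)$ is chosen.} Then $(v_1,v_2)$ and $(w_2,w_3)$ are both forced. This gives exactly one maximal matching, of size $3$.

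\textbf{Case 3: $(v_3,v_1)$ or $(v_3,v_2)$ is chosen.} Take $(v_3,v_1)$. Every edge at $v_2$ is then dominated, and so are the bridge and all pendant edges. Only the triangle $w_1w_2w_3$ remains, and it contributes exactly one of its three edges. This gives $3$ matchings of size $2$, and by symmetry another $3$ for $(v_3,v_2)$, for a total of $6$ of size $2$.

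These cases are exhaustive and disjoint, so
\[
m(G^3)=3k+1+6=3k+7,\qquad m'(G^3)=3(3k+1)+2\cdot 6=9k+15 .
\]
Hence $avm(G^3)=\frac{9k+15}{3k+7}=3-\frac{6}{3k+7}$. Substituting $k=n-6$ gives $3k+7=3n-11$, which is the claimed formula.

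The only real care point is verifying maximality and exhaustiveness in each case. In Case 1 one must check that no extra edge can be added, for instance that the bridge is already dominated by the chosen pendant edge. In Case 3 one must confirm that $v_2$ cannot be matched and that no further edge is possible. Both checks are immediate from the small core.
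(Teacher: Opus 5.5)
Your proposal is correct and follows essentially the same route as the paper. Both split on which edge covers $v_3$: a pendant edge gives $3k$ matchings of size $3$, the edges $(v_3,v_1)$ and $(v_3,v_2)$ give $6$ of size $2$, and the bridge gives $1$ of size $3$, so $m=3k+7$ and $m'=9k+15$. You also state explicitly why $v_3$ must be covered when $k\ge 1$, which the paper leaves implicit.
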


\begin{proof}

We enumerate all maximal matchings of the whole graph as follows. Choose a pendant edge $(v_3, l_{v_3})$ ($k$ choices) in a maximal matching. Then $v_3$ is covered and $(v_1,v_2)$ is also in this maximal matching. The bridge $(v_3,w_1)$ is dominated. There are $3$ ways to form this maximal matching: $(w_1,w_2)$, $(w_1,w_3)$, or $(w_2,w_3)$. Each such matching has size $3$. Hence this case contributes $k \times 3 = 3k$ maximal matchings of size $3$. Choose an edge $(v_3,v_1)$ or $(v_3,v_2)$ ($2$ choices). Then $v_3$ is covered, so all leaves and the bridge are dominated. There are also $3$ ways to form this maximal matching: $(w_1,w_2)$, $(w_1,w_3)$, or $(w_2,w_3)$. Each resulting maximal matching has size $2$. Contribution: $2 \times 3 = 6$ maximal matchings of size $2$. Choose the bridge $(v_3,w_1)$. Then $v_3$ and $w_1$ are covered. This yields a maximal matching \{$(v_1, v_2), (v_3, w_1), (w_2, w_3)$\} of size $3$. Contribution: $1$ maximal matching of size $3$.
\[
\begin{cases}
m(G^3)= 3k + 6 + 1 = 3k + 7,\\[4pt]
m'(G^3)= 3(3k) + 2(6) + 3(1) = 9k + 15.
\end{cases}
\]
Substituting $k = n-6$, we obtain the average size:
\[
avm(G^3) = \frac{9(n-6) + 15}{3(n-6) + 7} = \frac{9n - 39}{3n - 11} = 3 - \frac{6}{3n - 11}= 3 - \frac{6}{3k+7}=\dfrac{9k+15}{3k+7}.
\]
\end{proof}


\begin{lemma}\label{noleon3}
For $G \in \mathcal{C}(3, 3)$ with the core of $T_n^1(3, 3)$ (\cref{cpq}(C)), if there are no leaves attached to $v_3$ or $w_1$ in $G$, then $avm(G) > avm(G^3)$.
\end{lemma}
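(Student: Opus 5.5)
The plan is the same size-$2$ counting strategy used for $\mathcal{B}(p,q)$. Let $G\in\mathcal{C}(3,3)$ have the core of $T_n^1(3,3)$, with $k=n-6$ pendant edges attached only to vertices among $v_1,v_2,w_2,w_3$. Pendant paths hanging off non-core support vertices are also allowed.

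\textbf{Step 1: reduction to $m_2$.} A single edge cannot dominate two disjoint triangles, so every maximal matching has size at least $2$. As in Lemma~\ref{lem:no-c4}, this gives
\[
avm(G)\ge 3-\frac{m_2(G)}{m(G)}.
\]
By Lemma~\ref{avmg3}, $avm(G^3)=3-\frac{6}{3k+7}$. It therefore suffices to show $m_2(G)\,(3k+7)<6\,m(G)$.

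\textbf{Step 2: classify the size-$2$ maximal matchings.} First, if some support vertex lies off the core, then no size-$2$ maximal matching exists. Indeed, both edges would be needed to dominate the two triangles, leaving the off-core pendant edge undominated. In this case $m_2=0$ and $avm(G)\ge 3>avm(G^3)$.

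Otherwise, a size-$2$ maximal matching must consist of one edge from each triangle (possibly incident to $v_3$ or $w_1$), and it must dominate the bridge $v_3w_1$. So it contains an edge at $v_3$ or at $w_1$. Moreover, every vertex carrying leaves must be covered.

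\begin{itemize}
\item Leaves only at vertices of the left triangle, say at $v_1$. The only possible pattern is $\{(v_3,v_1),e\}$ with $e$ one of the three edges of the right triangle, so $m_2\le 3$.
\item Leaves at both $v_1$ and $v_2$. Here $(v_1,v_2)$ must be used, so an edge at $w_1$ is needed, giving $m_2\le 2$.
\item Leaves on both sides. The count is smaller still, $m_2\le 1$, e.g.\ $\{(v_3,v_1),(w_1,w_2)\}$.
\end{itemize}

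\textbf{Step 3: lower bound on $m(G)$.} For each leaf $\ell$ at a support vertex $s$, the edge $(s,\ell)$ extends to at least one maximal matching, and distinct leaves give distinct matchings. Hence $m\ge k$. A sharper count is better: a matching containing $(v_1,\ell)$ can be completed on the right triangle and bridge in at least $3$ ways (choose $(v_2,v_3)$ with any of the three right-triangle edges). This gives $m\ge 3k+m_2$.

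Then $m_2(3k+7)<6m$ reduces to $3(3k+7)<6(3k+3)$, i.e.\ $9k>3$. This holds for $k\ge1$. The case $k=0$ is excluded, since $G^3$ requires $k\ge1$; if the paper intends to cover it, the bare core can be checked directly.

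\textbf{Main obstacle.} The hardest part is the careful enumeration in Step 2, especially confirming that no size-$2$ matching avoids $v_3$ and $w_1$ while still dominating the bridge, and handling the mixed distributions of leaves. I would organize that enumeration by which of $v_3$ and $w_1$ is covered.
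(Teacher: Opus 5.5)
\textbf{There is a genuine gap: the enumeration in Step 2 undercounts the size-\(2\) maximal matchings, and the undercount breaks the final inequality.} Your reduction to \(avm(G)\ge 3-m_2(G)/m(G)\) is sound; the paper uses the same device in other lemmas.

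You correctly note that a size-\(2\) maximal matching uses one edge from each triangle and must contain an edge at \(v_3\) \emph{or} at \(w_1\). In the first bullet, however, you only use the \(v_3\) option. If all leaves sit at \(v_1\), then \(\{(v_1,v_2),(w_1,w_2)\}\) and \(\{(v_1,v_2),(w_1,w_3)\}\) are also maximal: the bridge is dominated at \(w_1\), and the uncovered vertices \(v_3,w_3\) are nonadjacent. So \(m_2=5\), not \(m_2\le 3\). Likewise, with leaves at \(v_1\) and \(w_2\), the matchings \(\{(v_1,v_3),(w_1,w_2)\}\), \(\{(v_1,v_3),(w_2,w_3)\}\) and \(\{(v_1,v_2),(w_1,w_2)\}\) are all maximal. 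So \(m_2=3\), not \(m_2\le 1\). The uniform bound \(m_2\le 3\) is therefore false.

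With the correct value \(m_2=5\), your bound \(m\ge 3k+m_2=3k+5\) only gives \(5(3k+7)<6(3k+5)\), which holds just for \(k\ge 2\). For \(k=1\) (seven vertices, one leaf at \(v_1\)) it yields \(avm(G)\ge 3-\frac{5}{8}=\frac{19}{8}<\frac{12}{5}=avm(G^3)\), so the argument does not close.

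The missing ingredient is the fourth completion \(\{(v_3,w_1),(w_2,w_3)\}\) of each pendant edge \((v_1,\ell)\). If \((v_2,v_3)\) is not used, \(v_2\) is exposed, so \(v_3\) must be covered through the bridge. This gives exactly \(m=4k+6\) and \(m'=12k+13\), hence \(avm(G)=3-\frac{5}{4k+6}\). Since \(6(4k+6)-5(3k+7)=9k+1>0\), this beats \(avm(G^3)\).

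Your justification of \(m\ge 3k+m_2\) in Step 3 also fails as stated once leaves lie on both triangles. With leaves at \(v_1\) and \(w_2\), the set \(\{(v_1,\ell),(v_2,v_3),(w_1,w_3)\}\) is not maximal. With leaves at \(v_1,v_2,w_2\), a pendant edge at \(v_1\) used alone has only two completions. Moreover, matchings containing pendant edges at two different support vertices get counted once for each.

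The inequality \(m\ge 3k+m_2\) does happen to hold in every configuration. Proving it, however, requires splitting according to which support vertices contribute a pendant edge to the matching. That is essentially the paper's route: it computes \(m\) and \(m'\) exactly for one support vertex, for \(\{v_1,v_2\}\), for \(\{v_1,w_2\}\), and for three support vertices, and observes that \(m_2=0\) when all four vertices of \(S\) carry leaves.
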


\begin{proof}
Let \(S=\{v_1,v_2,w_2,w_3\}\).

\noindent\textbf{Case 1.} All pendant edges are attached to one vertex in $S$.

Without loss of generality, assume that $v_1$ is attached to $k=n-6$ pendant edges.
If a pendant edge $(v_1, l_{v_1})$ is chosen (there are $k$ such choices), one may either take $(v_2,v_3)$, leaving with $3$ possibilities $(w_1, w_2)$ ($(w_1,w_3) \text{ or }(w_2,w_3)$), or take $(v_3,w_1)$ and $(w_2,w_3)$.  
Both options yield maximal matchings of size $3$; the first gives $3k$ maximal matchings, the second gives $k$ maximal matchings. If no pendant edge is in a maximal matching, choosing $(v_1,v_3)$ leaves with $3$ possibilities $(w_1, w_2)$ ($(w_1,w_3) \text{ or }(w_2,w_3)$), giving $3$ maximal matchings of size $2$. Choosing $(v_1,v_2)$ offers two sub‑options: either also take the bridge $(v_3,w_1)$ (forcing $(w_2,w_3)$, one maximal matching of size $3$), or omit the bridge and cover $w_1$ by one of its two incident edges $(w_1,w_2)$ or $(w_1,w_3)$ (giving $2$ maximal matchings of size $2$). Then we have
\[
m(G) = 4k + 6,\text{ and } m'(G) = 12k + 13.  
\]
Hence $avm(G) = (12k+13)/(4k+6) = 3 - 5/(4k+6)$.  
Comparing with $avm(G^3) = 3 - 6/(3k+7)$, the inequality $5/(4k+6) < 6/(3k+7)$ reduces to $9k+1>0$, which holds for all $k\ge1$. 

\noindent\textbf{Case 2.} All pendant edges are attached to exactly two vertices in $S$.
\begin{itemize}
\item \textit{All pendant edges are attached to  $v_1$ and $v_2$ (with the same counts as to $w_2$ and $w_3$).}
Let $a,b\ge1$ be the numbers of pendant edges on $v_1$ and $v_2$ respectively, so $a+b=k$. If $(v_1, l_{v_1})$ and $(v_2, l_{v_2})$ are chosen ($ab$ possibilities) in maximal matching, then $v_1,v_2$ are covered. Taking an edge $(w_1, w_2)$ or $(w_1,w_3)$ (two choices) yields a maximal matching of size $3$ and contributes $2ab$; taking the bridge instead forces $(w_2,w_3)$ and gives a maximal matching of size $4$, contributing $ab$. If $(v_1, l_{v_1})$ is chosen (in $a$ ways) in maximal matching but $(v_2, l_{v_2})$ is not, we must take $(v_2,v_3)$ to dominate the leaves on $v_2$. Then there are three choices $(w_1,w_2)$ (or $(w_1,w_2)$ or $(w_2, w_3)$), producing $3a$ maximal matchings of size $3$. Symmetrically, choosing only a leaf on $v_2$ gives $3b$ maximal matchings of size $3$. If no pendant edge is chosen, we must cover both $v_1$ and $v_2$ by taking $(v_1,v_2)$. Taking the bridge forces $(w_2,w_3)$ (size $3$, $1$ maximal matching); omitting the bridge forces one of $(w_1,w_2)$ or $(w_1,w_3)$ (size $2$, $2$ maximal matchings). Summing up: $m(G) = 3ab+3k+3$, $m'(G) = 10ab+9k+7$. Hence $avm(G) = 3 + \frac{ab-2}{3ab+3k+3}>avm(G^3)$.

\item \textit{All pendant edges are attached to  $v_1$ and $w_2$}. Let $a,b\ge1$ be the pendant edge counts on $v_1$ and $w_2$, with $a+b=k$. Choosing $(v_1,l_{v_1})$ and $(w_2,l_{w_2})$ in maximal matching ($ab$ ways) covers $v_1$ and $w_2$. The remaining vertices are $v_2,v_3,w_1,w_3$ form a path $P_4$: $v_2-v_3-w_1-w_3$. Maximal matchings on this path are: taking the two end edges $(v_2,v_3)$ and $(w_1,w_3)$ (size $4$, $ab$ maximal matchings) or taking the middle edge $(v_3,w_1)$ (size $3$, another $ab$ maximal matchings). Choosing only a leaf on $v_1$ ($a$ ways) to be in the maximal matching: to cover $w_2$ we may take $(w_1,w_2)$ or $(w_2,w_3)$. Taking $(w_1,w_2)$ covers $w_1$ and dominates the bridge, forcing $(v_2,v_3)$ (size $3$, $a$ maximal matchings). Taking $(w_2,w_3)$ leaves $w_1$ free and the bridge undominated; we must then cover the bridge by either $(v_2,v_3)$ or $(v_3,w_1)$, giving two possibilities (both size $3$, $2a$ matchings). Total from this case: $3a$ maximal matchings of size $3$. Symmetrically, choosing only a leaf on $w_2$ gives $3b$ maximal matchings of size $3$. If no pendant edge belongs to the maximal matching, then we must cover $v_1$ and $w_2$ using core edges. The pairs \{$(v_1,v_2), (w_1,w_2)$\}, \{$(v_1,v_3), (w_1,w_2)$\}, \{$(v_1,v_3), (w_2,w_3)$\} each give a maximal matching of size $2$ (3 maximal matchings). The combination $\{(v_1,v_2), (w_2,w_3)\}$ leaves the bridge undominated, forcing its inclusion and yielding a maximal matching of size $3$ (1 maximal matching). Summing: $m(G) = 2ab+3k+4$, $m'(G) = 7ab+9k+9$. Thus $avm(G)= 3 + \frac{ab-3}{2ab+3k+4}>avm(G^3)$.  
\end{itemize}

\noindent\textbf{Case 3.} All pendant edges are attached to three vertices in $S$.

Let pendant edges be attached to $v_1$, $v_2$ and $w_2$ with counts $a,b,c\ge1$, respectively, so that $k=a+b+c$. Choosing $(v_1,l_{v_1})$, $(v_2,l_{v_2})$ and $(w_2,l_{w_2})$ in maximal matching ($abc$ ways) covers $v_1,v_2,w_2$. The remaining vertices $v_3,w_1,w_3$ induce the edges $(v_3,w_1)$ and $(w_1,w_3)$, we must take either $(w_1,w_3)$ or $(v_3,w_1)$, both yielding a maximal matching of size $4$. Hence we obtain $2abc$ maximal matchings of size $4$. Choosing $(v_1,l_{v_1})$, $(v_2,l_{v_2})$ ($ab$ ways) forces us to cover $w_2$: taking $(w_1,w_2)$ gives size $3$, while taking $(w_2,w_3)$ forces the bridge $(v_3,w_1)$ and yields size $4$. Thus we get $ab$ maximal matchings of size $3$ and $ab$ of size $4$. Choosing $(v_1,l_{v_1})$, $(w_2,l_{w_2})$ ($ac$ ways) forces $(v_2,v_3)$ and then $(w_1,w_3)$, giving $ac$ maximal matchings of size $4$. Similarly, choosing $(v_2,l_{v_2})$, $(w_2,l_{w_2})$ ($bc$ ways) force $(v_1,v_3)$ and then $(w_1,w_3)$, giving $bc$ maximal matchings of size $4$. If exactly one pendant edge is chosen in maximal matching, suppose it is $(v_1,l_{v_1})$ ($a$ ways). Then we must cover $v_2$ and $w_2$: taking $(v_2,v_3)$ and then either $(w_1,w_2)$ or $(w_2,w_3)$ yields a maximal matching of size $3$, so $2a$ maximal matchings. Symmetrically, $(v_2,l_{v_2})$ ($b$ ways) gives $2b$ maximal matchings of size $3$ (via $(v_1,v_3)$ and either $(w_1,w_2)$ or $(w_2,w_3)$). $(w_2,l_{w_2})$ ($c$ ways) forces $(v_1,v_2)$ and then either $(v_3,w_1)$ or $(w_1,w_3)$, again size $3$, giving $2c$ maximal matchings. If no pendant edge is in maximal matching, we must cover $v_1,v_2,w_2$ using core edges. Taking $(v_1,v_2)$ together with $(w_1,w_2)$ yields a maximal matching of size $2$ (the only size‑$2$ maximal matching in this configuration). Taking $(v_1,v_2)$ together with $(w_2,w_3)$ and $(v_3,w_1)$, forcing its inclusion and giving a maximal matching of size $3$. Summing all contributions:
\[
\begin{aligned}
m(G) &= 2abc + (2ab + ab+ac+bc) + 2k + 2 = 2abc + 2ab + ac + bc + 2k + 2,\\
m'(G) &= 4(2abc + ab+ac+bc) + 3(2ab + 2k + 1) + 2\cdot1 = 8abc + 7ab + 4ac + 4bc + 6k + 5.
\end{aligned}
\]
Thus
\[
avm(G) = 4 - \frac{ab + 2k + 3}{m}>avm(G^3).
\]

\noindent\textbf{Case 4.} All four vertices of \(S\) are incident with pendant edges.

In this case no maximal matching of size \(2\) exists. Indeed, any
size-\(2\) maximal matching would have to cover all four vertices
\(v_1,v_2,w_2,w_3\), and hence it must be \(\{(v_1, v_2),(w_2,w_3)\}\); however,
this matching does not dominate the bridge \((v_3,w_1)\). Thus \(m_2(G)=0\),
and consequently
\[
avm(G)\ge 3>avm(G^3).
\]

\end{proof}

\begin{lemma}\label{lev3w1}
For $G \in \mathcal{C}(3, 3)$ with the core of $T_n^1(3, 3)$ (\cref{cpq}(C)), if $v_3$ and $w_1$ are both incident with some pendant edges in $G$, then $avm(G) > avm(G^3)$.
\end{lemma}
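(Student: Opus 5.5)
Since $v_3$ and $w_1$ both carry pendant edges, the plan is to show that every maximal matching of $G$ has size at least $3$. That gives $avm(G)\ge 3>3-\frac{6}{3n-11}=avm(G^3)$ by Lemma~\ref{avmg3}, with no counting needed. By the same observation as in the earlier lemmas, a support vertex off the core already forces larger matchings. So we may assume all pendant edges hang on core vertices, and in particular $v_3$ and $w_1$ are support vertices.

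First, no maximal matching has size $1$. A single edge cannot dominate both triangles $v_1v_2v_3$ and $w_1w_2w_3$: the bridge $(v_3,w_1)$ leaves $(v_1,v_2)$ and $(w_2,w_3)$ undominated, and any other edge leaves the far triangle's edge undominated.

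Next, suppose $M$ is a maximal matching of size $2$. It must dominate the edge $(v_1,v_2)$, so it contains an edge meeting $\{v_1,v_2\}$. This edge lies in the left triangle and is one of $(v_1,v_2)$, $(v_1,v_3)$, $(v_2,v_3)$. Symmetrically, $M$ contains an edge meeting $\{w_2,w_3\}$, which lies in the right triangle. Two distinct edges are needed because these triangles are vertex-disjoint, so these are the only edges of $M$. Since $M$ contains no pendant edge, every support vertex must be covered, or a pendant edge could be added. Hence $v_3$ and $w_1$ are both covered, so the left edge is $(v_1,v_3)$ or $(v_2,v_3)$ and the right edge is $(w_1,w_2)$ or $(w_1,w_3)$.

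I must then check whether such an $M$ can still be maximal, and this is the step I expect to be the main obstacle. The bridge is dominated and the leaves at $v_3,w_1$ are dominated. But one of $v_1,v_2$ and one of $w_2,w_3$ is left uncovered, and their neighbourhoods are otherwise covered. So such an $M$ is maximal only if those uncovered vertices carry no leaves. This shows the crude bound ``$m_2=0$'' is false in general, for instance when the leaves lie only on $v_3$ and $w_1$.

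The fallback plan is then the $3-m_2/m$ estimate used throughout the paper. Since $m_1=0$, we have $avm(G)\ge 3-\frac{m_2(G)}{m(G)}$, and from the analysis above $m_2(G)\le 4$. For a lower bound on $m(G)$, let $a,b\ge1$ be the numbers of leaves on $v_3$ and $w_1$. Choosing one pendant edge at $v_3$ and one at $w_1$ and completing with $(v_1,v_2)$ and $(w_2,w_3)$ gives $ab$ maximal matchings. A pendant edge at $v_3$ with $(v_1,v_2)$ and a right-triangle edge covering $w_1$ gives $2a$ more, and symmetrically $2b$. So $m(G)\ge ab+2a+2b\ge ab+2k$, where $k=a+b+(\text{other leaves})$. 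It then suffices to verify
\[
\frac{4}{m(G)}<\frac{6}{3k+7},
\]
that is, $2(3k+7)<3m(G)$. With $m\ge ab+2k$ this reads $3ab+6k>6k+14$, i.e.\ $ab>14/3$, so the bound fails when $ab\le 4$.

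For those small cases I would refine the count. Leaves at $v_1,v_2,w_2,w_3$ either kill size-$2$ matchings or add many size-$\ge3$ ones. When the other leaves are absent, $k=a+b$ and an exact enumeration gives
\[
m=ab+2a+2b+5,\qquad m_2=4.
\]
Here the additional $5$ counts the four size-$2$ matchings together with the matching $\{(v_3,w_1),(v_1,v_2),(w_2,w_3)\}$, and $2a+2b$ comes from taking $(v_1,v_2)$ with $(w_1,w_2)$ or $(w_1,w_3)$ after a leaf at $v_3$, and symmetrically. The required inequality becomes $3ab+6k+15>6k+14$, which holds. Handling the mixed cases with leaves at $v_1,v_2,w_2,w_3$ in the same way, using $m_2\le 2$ there, completes the plan.
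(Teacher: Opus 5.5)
Your reduction to leaves on the core, the facts $m_1=0$ and $m_2\le 4$, and your treatment of the configuration with leaves only on $v_3$ and $w_1$ all match the paper. That last case is exactly the paper's Case~1: the same count $m(G)=ab+2k+5$, with the four size-$2$ matchings $\{v_iv_3,w_1w_j\}$ and all others of size $3$ or $4$. One small repair is needed. The edge of $M$ meeting $\{v_1,v_2\}$ could a priori be a pendant edge at $v_1$ or $v_2$. What rules this out is that $v_3$ must be covered, and the edge meeting $\{w_2,w_3\}$ cannot contain $v_3$.

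The real gap is the case where some vertex of $S=\{v_1,v_2,w_2,w_3\}$ also carries leaves. This covers two of the paper's three cases, and you dismiss it in one sentence.

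\begin{itemize}
\item The only lower bound on $m(G)$ you wrote, $m\ge ab+2a+2b\ge ab+2k$, uses $2a+2b\ge 2k$. That is false exactly here, since $k=a+b+c$ with $c\ge1$ leaves on $S$.
\item The valid bound $ab+2a+2b$ does not grow with $c$. So it cannot deliver what $m_2\le 2$ requires, namely $3m>3k+7$, i.e.\ $m\ge k+3$. Take $a=b=1$ and $c$ large to see this.
\end{itemize}

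You must count the maximal matchings that pass through the $S$-leaves.

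\begin{itemize}
\item Suppose only $v_1$ in $S$ has leaves, $c$ of them. Consider the families $\{v_3\ell,w_1\ell',v_1v_2,w_2w_3\}$ ($ab$ of them), $\{v_3\ell,v_1v_2,w_1w_j\}$ ($2a$), $\{w_1\ell',w_2w_3,v_1v_3\}$ ($b$), $\{v_1\ell'',v_2v_3,w_1w_2\}$ ($c$), and the two size-$2$ matchings. All are maximal. They are pairwise distinct, because they differ in which of $v_3,w_1,v_1$ are matched to leaves. Hence $m\ge ab+2a+b+c+2\ge k+4$.
\item If one $S$-vertex in each triangle has leaves, then $m_2\le1$. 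The analogous list gives $m\ge k+1$, hence $6m>3k+7$.
\item If two $S$-vertices in the same triangle have leaves, then $m_2=0$.
\end{itemize}

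With these counts supplied, your $3-m_2/m$ route goes through. The paper instead enumerates these configurations exactly and obtains the stronger statement $m'(G)-3m(G)>0$, i.e.\ $avm(G)>3$, in all of them.
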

\begin{proof}
Let $S = \{v_1, v_2, w_2, w_3\}$. If every vertex in $S$ is incident to pendant edges, or exactly three vertices of $S$ are incident to pendant edges, then there is no maximal matching of size $2$. Moreover, if two vertices of $S$ lie in the same cycle and both are incident to pendant edges, then again no maximal matching of size $2$ exists. Consequently, we only need to consider the following three cases: (1)no vertex of $S$ is incident to pendant edges; (2) exactly one vertex of $S$ is incident to pendant edges; (3)two vertices of $S$ (in different cycles) are incident to pendant edges.
 
\noindent\textbf{Case 1.} No vertex of $S$ is incident to pendant edges.

Consider the configuration where leaves are attached only to $v_3$ and $w_1$, with $a\ge1$ leaves on $v_3$, $b\ge1$ leaves on $w_1$, and $a+b=k$. If leaves from both sides are chosen ($ab$ ways), then $v_3$ and $w_1$ are covered, giving a maximal matching of size $4$.  
If only a leaf on $v_3$ is chosen ($a$ ways), we must cover $w_1$ by taking either $(w_1,w_2)$ or $(w_1,w_3)$ (2 choices); each yields a maximal matching of size $3$, contributing $2a$.  
Symmetrically, choosing only a leaf on $w_1$ ($b$ ways) gives $2b$ maximal matchings of size $3$.  
If no leaf is chosen, we have two subcases: without the bridge, we may choose $(v_1, v_3)$ or $(v_2,v_3)$, $(w_1, w_2)$ or $(w_1,w_3)$, yielding $4$ maximal matchings of size $2$; with the bridge $(v_3,w_1)$ itself, we only choose $(v_1,v_2)$ and $(w_2,w_3)$, giving $1$ maximal matching of size $3$. Then we have
\[
m(G)= ab + 2a + 2b + 5 = ab + 2k + 5,\qquad
m'(G) = 4ab + 3(2a+2b+1) + 2\cdot4 = 4ab + 6k + 11.
\]
Comparing with the reference $\dfrac{9k+15}{3k+7}$, the difference
\[
\frac{4ab+6k+11}{ab+2k+5} - \frac{9k+15}{3k+7}
= \frac{3abk + 13ab + 2}{(ab+2k+5)(3k+7)} > 0,
\]
since $a,b\ge1$ and $k\ge2$. Hence $avm(G)>avm(G^3)$.

\noindent\textbf{Case 2.} Exactly one vertex of $S$ is incident to pendant edges.

Consider the case where leaves are attached to $v_1$ ($a$ leaves), $v_3$ ($x$ leaves), and $w_1$ ($y$ leaves).  
When leaves from three or two of these vertices are chosen, we obtain maximal matchings of size $4$: the number is $axy$ (all three) plus $ay$ (choosing $a$ and $y$) plus $xy$ (choosing $x$ and $y$); maximal matchings of size $3$ from this part come only from choosing $a$ and $x$, giving $2ax$. When exactly one leaf is chosen, we have the following. Choosing a leaf on $v_1$ ($a$ ways): we may take $(v_2,v_3)$ together with $(w_1, w_2)$ or $(w_1, w_3)$; or we may take the bridge $(v_3,w_1)$ which forces $(w_2,w_3)$; this yields $3$ possibilities, all of size $3$, contributing $3a$. Choosing a leaf on $v_3$ ($x$ ways): we must cover $v_1$ by $(v_1,v_2)$ and then have $2$ choices $(w_1, w_2)$ or $(w_1, w_3)$, giving $2x$ maximal matchings of size $3$. Choosing a leaf on $w_1$ ($y$ ways): we must cover $v_1$ and $v_3$ by $(v_1,v_3)$ (since $(v_1,v_2)$ would leave $v_3$ uncovered) and then $(w_2,w_3)$ is forced, yielding $y$ maximal matchings of size $3$. When no leaf is chosen, taking $(v_1,v_3)$ and then either $(w_1, w_2)$ or $(w_1, w_3)$ gives $2$ maximal matchings of size $2$. Additionally, the maximal matching $\{(v_1,v_2), (v_3,w_1), (w_2,w_3)\}$ has size $3$, contributing $1$ maximal matching of size $3$. Then we have
\[
\begin{cases}
m(G) = (axy + ay + xy) + 2ax + 3a + 2x + y + 2 + 1,\\[4pt]
m'(G) = 4(axy + ay + xy) + 3(2ax + 3a + 2x + y) + 2(2) + 3(1)
\end{cases}
\]
satisfy $m'-3m = axy + ay + xy - 2$. Since $a,x,y\ge1$, this difference is at least $1$, hence strictly positive. Therefore the average size is always greater than $3$ and $avm(G)>avm(G^3)$.

\noindent\textbf{Case 3.} Two vertices of $S$ ($v_1$ and $w_2$) are incident to pendant edges.

Consider the case where leaves are attached to four vertices: $v_1$ ($a$ leaves), $w_2$ ($b$ leaves), $v_3$ ($x$ leaves), and $w_1$ ($y$ leaves), with $a,b,x,y\ge1$. If the pendant edges attached to all four of these vertices, or to exactly three of them, are chosen to be in a maximal matching, then every resulting maximal matching has size 4, the number of such maximal matchings is $abxy$ (all four) plus $abx + aby + axy + bxy$ (any three). When exactly two pendant edges are chosen to be in a maximal matching, \{$(v_1,l_{v_1})$, $(w_2, l_{w_2})$, $(v_2, v_3)$, $(w_1, w_3)$\} yields $ab$ maximal matchings of size $4$, and additionally $ab$ maximal matchings of size $3$ coming from \{$(v_1,l_{v_1})$, $(w_2, l_{w_2})$, $(v_3,w_1)$\}. The pairs \{$(v_1, l_{v_1})$, $(w_1, l_{w_1})$\}, \{$(v_3, l_{v_3})$, $(w_2, l_{w_2})$\} and \{$(v_3, l_{v_3})$, $(w_1, l_{w_1})$\} each force size $4$, contributing $ay+bx+xy$ maximal matchings of size $4$. The pairs \{$(v_1, l_{v_1})$, $(v_3, l_{v_3})$\} and \{$(w_1, l_{w_1})$, $(w_2, l_{w_2})$\} each yield maximal matchings of size $3$, contributing $ax+by$ maximal matchings of size $3$. When exactly one pendant edge is chosen to be in a maximal matching, we have the following. Choosing a leaf on $v_1$ ($a$ ways): we may take $(v_2,v_3)$ together with $(w_1,w_2)$, or take the bridge $(v_3,w_1)$ together with $(w_2,w_3)$; both options give size $3$, so $2a$ maximal matchings. Similarly, a leaf on $w_2$ ($b$ ways) gives $2b$ maximal matchings of size $3$; a leaf on $v_3$ ($x$ ways) gives $x$ maximal matchings of size $3$; a leaf on $w_1$ ($y$ ways) gives $y$ maximal matchings of size $3$. When no leaf is chosen to be in a maximal matching, we must cover $v_1,w_2,v_3,w_1$. One possibility is $(v_1,v_3)$ together with $(w_1,w_2)$, which yields the unique maximal matching of size $2$ (count $1$). Another possibility is the maximal matching $\{(v_1,v_2), (v_3,w_1), (w_2,w_3)\}$, which has size $3$ (count $1$).  

Now compute $m'(G)-3m(G)$. All maximal matchings of size $3$ contribute $0$ to $m'(G)-3m(G)$. The contributions from size‑$4$ maximal matchings are $+1$ each, and the single size‑$2$ maximal matching contributes $-1$. Therefore
\[
m'(G)-3m(G) = (abxy + abx + aby + axy + bxy) + (ab + ay + bx + xy) - 1.
\]
Since each of the nine terms $abxy, abx, aby, axy, bxy, ab, ay, bx, xy$ is at least $1$, their sum is at least $9$, so $m'(G)-3m(G)> 0$. Hence the average size satisfies $avm(G) > 3$ in this configuration and $avm(G) > avm(G^3)$.
\end{proof}

\begin{lemma}\label{lev3}
For $G \in \mathcal{C}(3, 3)$ with the core of $T_n^1(3, 3)$ (\cref{cpq}(C)). If $v_3$ is incident with some pendant edges ($w_1$ is not) and at least one vertex of $S = \{v_1, v_2, w_2, w_3\}$ is also incident with some pendant edges in $G$, then $avm(G) > avm(G^3)$.
\end{lemma}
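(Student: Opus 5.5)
The strategy is the one used in Lemmas~\ref{lem:no-c4}--\ref{lev3w1}. Since $G\in\mathcal{C}(3,3)$ has no maximal matching of size $1$ (no single edge dominates both triangles), we have $m'(G)\ge 3m(G)-m_2(G)$ and therefore
\[
avm(G)\ge 3-\frac{m_2(G)}{m(G)}.
\]
Because $avm(G^3)=3-\frac{6}{3k+7}$, it suffices to show
\[
m_2(G)\,(3k+7)<6\,m(G),
\]
or, in the degenerate cases, that $m_2(G)=0$.

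\textbf{Step 1: control of $m_2$.} First I would list the pure core maximal matchings of size $2$ in the core of $T_n^1(3,3)$. Since $v_3$ carries leaves, any size-$2$ maximal matching with no pendant edge must cover $v_3$, and it must also cover every vertex of $S$ that carries leaves. A size-$2$ maximal matching cannot contain a pendant edge: the other edge would then have to dominate the whole triangle $w_1w_2w_3$ together with $v_1v_2$, which is impossible. Covering $v_3$ without the bridge gives the six matchings $\{(v_3,v_i),e\}$ with $i\in\{1,2\}$ and $e$ an edge of the second triangle. With the bridge, $\{(v_3,w_1),\ast\}$ needs both $(v_1,v_2)$ and $(w_2,w_3)$, so it has size $3$. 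Hence the candidates are exactly these six.

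Now impose the condition that the vertices of $S$ carrying leaves are covered:
\begin{itemize}
\item if $v_1$ carries leaves, only $(v_3,v_1)$ works on the left, leaving $3$ candidates;
\item if $w_2$ carries leaves, only $e\in\{(w_1,w_2),(w_2,w_3)\}$ works, leaving $4$ candidates;
\item if both $v_1$ and $v_2$ carry leaves, $m_2=0$;
\item if both $w_2$ and $w_3$ carry leaves, only $(w_2,w_3)$ works, but then the bridge is undominated, so again $m_2=0$.
\end{itemize}
In every case of the lemma, therefore, $m_2(G)\le 4$, and the possible values are $0,1,2,3,4$ depending on which vertices of $S$ are loaded.

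\textbf{Step 2: lower bound on $m$.} Let $a\ge1$ be the number of leaves on $v_3$, and let the remaining $k-a\ge1$ leaves lie on $S$. I would lower-bound $m(G)$ by counting only the matchings that contain a pendant edge.
\begin{itemize}
\item Each leaf at $v_3$ extends in at least $3$ ways (forced $(v_1,v_2)$ or an edge at the loaded vertex, then one edge of the other triangle). This gives at least about $2a$ to $3a$ matchings.
\item Each leaf at a vertex $s\in S$ extends in at least $2$ ways.
\end{itemize}
Together these give $m(G)\ge 2k$, and in fact roughly $m(G)\ge 3a+2(k-a)\ge 2k+1$. Combined with $m_2\le 4$, the required inequality $4(3k+7)<6m$ then needs $m>2k+14/3$. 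This bound is slightly short of what $m\ge 2k+1$ delivers, so I would add the products of choosing leaves at two different loaded vertices, which contribute at least $a(k-a)\ge k-1$ further matchings. That yields $m\ge 3k$ once $k\ge 3$, and then $18k>12k+28$ holds for $k\ge5$. The small values of $k$ would be settled by a direct finite check or a sharper count.

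\textbf{Main obstacle.} The real difficulty is that the crude bound $m_2\le4$ is loose. I would instead split by the loaded subset of $S$, mirroring Lemma~\ref{noleon3}: exactly one loaded vertex ($v_1$-type or $w_2$-type), or two loaded vertices in different triangles, the other configurations already giving $m_2=0$. In each case I would compute $m$ and $m'$ exactly as polynomials in the leaf counts, as was done before. Then I would verify that $\frac{m_2}{m}<\frac{6}{3k+7}$, or equivalently that $m'-3m$ exceeds $-\frac{6m}{3k+7}$. The $w_2$-type single-vertex case, which has the largest $m_2=4$, is the tightest and is where I expect the most care to be needed.
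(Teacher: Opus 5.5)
\textbf{The gap: a false case in Step~1.} Your framework is the paper's. You bound $avm(G)$ below by $3-m_2(G)/m(G)$ (equivalently, you control $m'(G)-3m(G)$), split according to which vertices of $S$ carry leaves, and enumerate exactly. The problem is the last bullet of Step~1. When $w_2$ and $w_3$ both carry leaves, you say that $(w_2,w_3)$ leaves the bridge undominated. But you have just shown that every size-$2$ candidate contains $(v_3,v_1)$ or $(v_3,v_2)$, so $v_3$ is covered and $(v_3,w_1)$ is dominated. Hence $\{(v_1,v_3),(w_2,w_3)\}$ and $\{(v_2,v_3),(w_2,w_3)\}$ are maximal matchings of $G$, and $m_2(G)=2$, not $0$. (The undominated bridge belongs to Case~4 of Lemma~\ref{noleon3}, where $v_3$ carries no leaves and need not be covered.)

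\textbf{Consequences for your case list.} Your final list discards ``the other configurations'' as having $m_2=0$, so it omits exactly the paper's Case~2. There, with $x,b,c\ge1$ leaves on $v_3,w_2,w_3$, full enumeration gives $m'(G)-3m(G)=xbc+xb+xc+bc-2\ge 2$, so $avm(G)>3$. The same slip hides the configurations where the loaded part of $S$ is $\{v_1,w_2,w_3\}$ or $\{v_2,w_2,w_3\}$. There $\{(v_1,v_3),(w_2,w_3)\}$ (resp.\ $\{(v_2,v_3),(w_2,w_3)\}$) is a size-$2$ maximal matching, so $m_2(G)=1$. The paper's case list does not single these out either, but they are easy. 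We have $m'(G)-3m(G)\ge m_4(G)-m_2(G)$, and one pendant edge at each of the four loaded vertices already forms a maximal matching of size $4$. Hence $avm(G)\ge 3>avm(G^3)$. Under the hypotheses of this lemma, $m_2(G)=0$ holds precisely when both $v_1$ and $v_2$ carry leaves.

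\textbf{Step~2 is not reliable either.} The claim that each leaf at $v_3$ extends in at least $3$ ways is false. If only $w_2$ is loaded in $S$, a pendant edge at $v_3$ used alone extends in exactly $2$ ways; if $w_2,w_3$ are loaded, it extends in exactly one way, via $(v_1,v_2),(w_2,w_3)$. The estimates are also stated only ``roughly'', and small $k$ is left open. Since you ultimately fall back on exact enumeration, this does no damage. But the argument then rests entirely on that enumeration, which must include the $\{w_2,w_3\}$ and three-vertex configurations with their correct values of $m_2$.
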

%
\begin{proof}
Without loss of generality, assume that $x(x\ge 1)$ leaves are adjacent to $v_3$. Similar to Lemma \ref{lev3w1}, we only need to consider the following cases.

\noindent\textbf{Case 1.} Two vertices of $S$ ($v_1$ and $w_2$) are incident to pendant edges. 

Consider the case where leaves are attached to the two vertices: $v_1$ ($a$ leaves), $w_2$ ($b$ leaves), with $a,b\ge1$ and total leaves $k=a+b+x$. If $(v_1, l_{v_1})$, $(v_3, l_{v_3})$ and $(w_2, l_{w_2})$ are in a maximal matching, then $(w_1,w_3)$ is also in this maximal matching. Hence we obtain $axb$ maximal matchings of size $4$. When no leaf is chosen in maximal matching. Taking $(v_1,v_3)$ together with either $(w_1, w_2)$ or $(w_2, w_3)$ produces $2$ maximal matchings of size $2$. Additionally, the maximal matching $\{(v_1,v_2), (v_3,w_1), (w_2,w_3)\}$ has size $3$, contributing $1$ maximal matching of size $3$.

If $(v_1, l_{v_1})$ and $(v_3, l_{v_3})$ are in a maximal matching ($ax$ ways), choosing $(w_1,w_2)$ or $(w_2,w_3)$ gives $2ax$ maximal matchings of size $3$. For $(v_1, l_{v_1})$ and $(w_2, l_{w_2})$ ($ab$ ways), taking $(v_2,v_3)$ and $(w_1,w_3)$ yields $ab$ maximal matchings of size 4, while taking the bridge $(v_3,w_1)$ yields $ab$ maximal matchings of size 3. For $(v_3, l_{v_3})$ and $(w_2, l_{w_2})$ ($xb$ ways), taking $(v_1,v_2)$ and $(w_1,w_3)$ yields $xb$ maximal matchings of size $4$.

Choosing $(v_1, l_{v_1})$ ($a$ ways) allows either $(v_2,v_3)$ with $(w_1, w_2)$ (or $(w_2, w_3)$) ($2$ choices) or the bridge $(v_3,w_1)$ with $(w_2,w_3)$; all give size $3$, so $3a$ maximal matchings. Choosing $(v_3, l_{v_3})$ ($x$ ways) forces us to cover $v_1$ and $w_2$: take $(v_1,v_2)$ and either right $(w_1, w_2)$ or $(w_2, w_3)$ ($2$ ways), giving $2x$ maximal matchings of size $3$. Choosing $(w_2, l_{w_2})$ ($b$ ways) yields two options: $(v_1,v_3)$ with $(w_1,w_3)$, or $(v_1,v_2)$ with the bridge $(v_3,w_1)$; both have size $3$, so $2b$ maximal matchings.

Now compute $m'(G)-3m(G)$. The only contributions come from size‑$4$ maximal matchings ($+1$ each) and the two size‑$2$ maximal matchings ($-1$ each). Therefore
\[
m'(G)-3m(G) = (axb) + (ab + xb) + (2\cdot(-1)) = axb + ab + xb - 2.
\]
Since $a,b,x\ge1$, we have $axb + ab + xb \ge 1+1+1 = 3$, hence $m'(G)-3m(G) \ge 1$. Consequently $avm(G) > avm(G^3)$ for this configuration.

\noindent\textbf{Case 2.} Two vertices of $S$ ($w_2$ and $w_3$) are incident to pendant edges.

Consider the case where leaves are attached to the two vertices: $w_2$ ($b$ leaves), $w_3$ ($c$ leaves), with $b,c\ge1$ and total leaves $k=x+b+c$. If $(v_3, l_{v_3})$, $(w_2, l_{w_2})$ and $(w_3, l_{w_3})$ are in maximal matching, then $(v_1,v_2)$ is also in this maximal matching. Hence we obtain $xbc$ maximal matchings of size $4$. When no leaf is chosen in maximal matching. Taking $(w_2,w_3)$ together with either $(v_1,v_3)$ or $(v_2,v_3)$ produces $2$ maximal matchings of size $2$. Additionally, the maximal matching $\{(v_1,v_2), (v_3,w_1), (w_2,w_3)\}$ has size $3$, contributing $1$ maximal matching of size $3$.

If $(v_3, l_{v_3})$ and $(w_2, l_{w_2})$ are in a maximal matching ($xb$ ways), choosing $(w_1,w_3)$ and $(v_1,v_2)$ gives $xb$ maximal matchings of size $4$. Symmetrically, $(v_3, l_{v_3})$ and $(w_3, l_{w_3})$ ($xc$ ways) forces $(w_1,w_2)$ and $(v_1,v_2)$, yielding $xc$ maximal matchings of size $4$. For $(w_2, l_{w_2})$ and $(w_3, l_{w_3})$ ($bc$ ways), choosing either $(v_1,v_3)$ or $(v_2,v_3)$ gives $2bc$ maximal matchings of size $3$; alternatively, choosing $(v_3,w_1)$ together with $(v_1,v_2)$ yields $bc$ maximal matchings of size $4$.

Choosing $(v_3, l_{v_3})$ ($x$ ways) forces us to cover $w_2$ and $w_3$: take $(v_1,v_2)$ and $(w_2, w_3)$, giving $x$ maximal matchings of size $3$. Choosing $(w_2, l_{w_2})$ ($b$ ways) yields two options: $(v_1,v_3)$ with $(w_1,w_3)$, or $(v_2,v_3)$ with $(w_1,w_3)$; both have size $3$, so $2b$ maximal matchings. Symmetrically, choosing $(w_3, l_{w_3})$ gives $2c$ maximal matchings of size $3$.

Now compute $m'(G)-3m(G)$. The contributions come only from size‑$4$ maximal matchings ($+1$ each) and the two size‑$2$ maximal matchings ($-1$ each). Therefore
\[
m'(G)-3m(G) = (xbc) + (xb + xc) + (bc) + (2\cdot(-1)) = xbc + xb + xc + bc - 2.
\]
Since $x,b,c\ge1$, we have $xbc + xb + xc + bc \ge 1+1+1+1 = 4$, so $m'(G)-3m(G) \ge 2 > 0$.

\noindent\textbf{Case 3.} Exactly one vertex of $S$ ($v_1$) is incident to pendant edges.

Let $v_1$ be adjacent to $a$ leaves, with $a\ge1$ and $k=x+a$. In this case, all maximal matchings have size $2$ or $3$. If $(v_1, l_{v_1})$ and $(v_3, l_{v_3})$ are in a maximal matching ($ax$ ways), then $v_1$ and $v_3$ are covered. Choosing $(w_1, w_2)$ or $(w_1, w_3)$ or $(w_2, w_3)$ gives $3ax$ maximal matchings of size $3$. When no leaf is chosen in a maximal matching. We have two types of maximal matchings. Taking $(v_1,v_3)$ together with $(w_1, w_2)$ or $(w_1, w_3)$ or $(w_2, w_3)$ yields $3$ maximal matchings of size $2$. Additionally, the maximal matching $\{(v_1,v_2), (v_3,w_1), (w_2,w_3)\}$ has size $3$, contributing $1$ maximal matching of size $3$.

If exactly one leaf is chosen in a maximal matching. For $(v_3, l_{v_3})$ ($x$ ways), taking $(v_1, v_2)$ together with $(w_1, w_2)$ or $(w_1, w_3)$ or $(w_2, w_3)$ gives  $3x$ maximal matchings of size $3$. For $(v_1, l_{v_1})$ ($a$ ways). Two possibilities exist: Taking $(v_2,v_3)$ together with $(w_1, w_2)$ or $(w_1, w_3)$ or $(w_2, w_3)$ gives $3a$ maximal matchings of size $3$. Taking $(v_3,w_1)$ together with $(w_2, w_3)$ gives $a$ maximal matchings of size $3$. 
\[
m(G) = 3ax + 3x + 4a + 4,\qquad 
m'(G) = 3(3ax + 3x + 4a + 1) + 2\cdot3 = 9ax + 9x + 12a + 9.
\]
Thus the average size is
\[
avm(G)= \frac{9ax + 9x + 12a + 9}{3ax + 3x + 4a + 4}
= 3 - \frac{3}{3ax + 3x + 4a + 4}>avm(G^3).
\]

\noindent\textbf{Case 4.}  Exactly one vertex of $S$ ($w_2$) is incident to pendant edges.

Let $w_2$ is adjacent with $b$ leaves, with $b\ge1$ and $k=x+b$. If $(v_3, l_{v_3})$ and $(w_2, l_{w_2})$ are in a maximal matching ($xb$ ways), then $(v_1,v_2)$ and $(w_1,w_3)$ are also in this maximal matching, giving $xb$ maximal matchings of size $4$. When no leaf is chosen in a maximal matching. Choosing $(v_1,v_3)$ or $(v_2,v_3)$ and $(w_1,w_2)$ or $(w_2,w_3)$ yields $4$ maximal matchings of size $2$. Additionally, the maximal matching $\{(v_1,v_2), (v_3,w_1), (w_2,w_3)\}$ contributes $1$ maximal matching of size $3$.

If exactly one leaf is chosen in a maximal matching. For $(v_3, l_{v_3})$ ($x$ ways), taking $(v_1,v_2)$ together with $(w_1,w_2)$ or $(w_2,w_3)$ yields $2x$ maximal matchings of size $3$. For $(w_2, l_{w_2})$ ($b$ ways), taking $(w_1,w_3)$ together with $(v_1,v_3)$ or $(v_2,v_3)$ yields $2b$ maximal matchings of size 3, taking $(v_1,v_2)$ together with the bridge $(v_3,w_1)$ yields $b$ maximal matchings of size 3.
\[
m(G) = xb + 2x + 3b + 5,\qquad
m'(G) = 4xb + 3(2x+3b+1) + 2\cdot4 = 4xb + 6x + 9b + 11.
\]
Thus the average size is
\[
avm(G) = \frac{4xb + 6x + 9b + 11}{xb + 2x + 3b + 5}
= 3 + \frac{xb - 4}{xb + 2x + 3b + 5}>avm(G^3).
\]
\end{proof}

\begin{lemma}\label{r=2}
For $G\in \mathcal{C}(3, 3)$ with $7$ vertices at least, if $G$ has the following core (\cref{d323}), then $avm(G)>avm(G^3)$.
\begin{figure}[h]
\begin{tikzpicture}[scale=0.8]
\node[vertex] (v3) at (-0.5,0) [label=above:$v_3$] {};
\node[vertex] (v1) at (-1.5, 0.6) [label=above:$v_1$] {};
\node[vertex] (v2) at (-1.5, -0.6)[label=below:$v_2$] {};
\node[vertex] (k) at (0.5,0) [label=above:$u$] {};
\node[vertex] (w1) at (1.5,0) [label=above:$w_1$] {};
\node[vertex] (w2) at (2.5, 0.6)[label=above:$w_2$] {};
\node[vertex] (w3) at (2.5, -0.6)[label=below:$w_3$] {};
\draw[edge] (v3) -- (v1) -- (v2) -- (v3);
\draw[edge] (v3) --  (w1);
\draw[edge] (w1) -- (w2) -- (w3) -- (w1);
\end{tikzpicture}
\caption{}\label{d323}
\end{figure}
\end{lemma}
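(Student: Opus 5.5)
The plan is to reuse the bound from the proof of Lemma~\ref{lem:no-c4}: if no maximal matching has size $1$, then $avm(G)\ge 3-m_2(G)/m(G)$, and then to compare with $avm(G^3)=3-\frac{6}{3n-11}$ from Lemma~\ref{avmg3}. Label the core as in the figure, with the path $v_3uw_1$ of length $2$. The core has $7$ vertices, so $G$ has $k=n-7\ge 0$ pendant edges. It therefore suffices to prove
\[
m(G)\,>\,\frac{m_2(G)\,(3n-11)}{6}=\frac{m_2(G)\,(3k+10)}{6}.
\]
No single edge dominates both triangles, so $m_1(G)=0$. If some support vertex lies off the core, then no size-$2$ maximal matching exists, exactly as in Lemma~\ref{lem:no-c4}, so $avm(G)\ge 3$ and we are done. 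We may therefore assume every pendant edge is attached to a core vertex.

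\textbf{Step 1: classify the size-$2$ maximal matchings.} A pendant edge cannot lie in one, because the remaining edge cannot dominate both triangles. Hence a size-$2$ maximal matching consists of one edge for each triangle, and each such edge must be incident to that triangle in a way that dominates all three of its edges. The path edges $v_3u$ and $uw_1$ must also be dominated. Taking $uv_3$ leaves $v_1v_2$ undominated unless the second edge is $v_1v_2$, and then the right triangle is undominated; the case $uw_1$ is symmetric. So the only candidates are $\{e,f\}$ with $e\in\{v_1v_3,v_2v_3\}$ and $f\in\{w_1w_2,w_1w_3\}$. This gives $m_2(G)\le 4$. Moreover, $m_2(G)=0$ as soon as $u$ carries a leaf, and $m_2(G)$ drops further whenever a leaf sits on a vertex of the triangles that these candidates leave uncovered.

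\textbf{Step 2: lower-bound $m(G)$.} For each pendant edge $zl$, deleting $z$ and its leaves leaves a graph in which at least one triangle is untouched, so $zl$ extends to at least two distinct maximal matchings. Matchings through different pendant edges are distinct, so these contribute at least $2k$. On top of this, count the pendant-free matchings. These are the $m_2(G)$ matchings from Step 1, together with the size-$3$ core matchings containing $v_3u$ or $uw_1$ (for example $\{v_3u,v_1v_2,f\}$ with $f$ a right-triangle edge), which survive whenever the leaves do not obstruct them. For $k=0$ a direct count gives $m=10$ and $m_2=4$. The required inequality is then $10>4\cdot 10/6$, which holds.

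\textbf{Step 3 (the main obstacle): the tradeoff when $k\ge 1$.} When $m_2=4$, no leaf is on $u$, $v_1$, $v_2$, $w_2$, $w_3$, so all leaves are on $v_3$ and $w_1$. Then I need $m\ge 2k+7$. I expect to obtain this from the $2k$ pendant matchings plus the $4$ size-$2$ matchings plus at least three further ones. I would exhibit these explicitly: each pendant edge at $v_3$ actually extends in at least three ways (say $\{l v_3,v_1v_2,uw_1\}$ and $\{lv_3,v_1v_2,f\}$), and some core size-$3$ matchings such as $\{uv_3,v_1v_2,w_2w_3\}$ remain maximal. When $m_2\le 3$ the target drops to roughly $\tfrac{3k+10}{2}$, which the $2k$ count plus a couple of core matchings covers once $k\ge 1$; I would verify the small-$k$ boundary cases by hand. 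When $m_2=0$ we directly get $avm\ge 3$. The delicate part is making the extension counts in Step 2 rigorous for every attachment pattern. I would do this case by case according to which of $\{v_3,w_1\}$ and which triangle vertices carry leaves, mirroring Lemmas~\ref{noleon3}--\ref{lev3}.
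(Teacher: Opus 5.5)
\textbf{Verdict: the approach matches the paper's, but Step~2 rests on a false claim and Step~3 is not carried out.}

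Your strategy is the paper's. You reduce to $avm(G)\ge 3-m_2(G)/m(G)$, and you show the only possible size-$2$ maximal matchings are $\{e,f\}$ with $e\in\{v_1v_3,v_2v_3\}$ and $f\in\{w_1w_2,w_1w_3\}$. You then bound $m(G)$ below by counting pendant-containing and pendant-free maximal matchings; this is the paper's $m_2\le\alpha(X)$ and $m\ge 3k'+\gamma(X)-1$. Your reduction, the off-core support-vertex case, and Step~1 are correct.

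\textbf{The gap (Step~2).} Every comparison in Step~3 rests on Step~2, and its claim that matchings through different pendant edges are distinct is false. If $v_3$ and $w_1$ carry leaves $l$ and $l'$, then $\{v_3l,\,w_1l',\,v_1v_2,\,w_2w_3\}$ is a single maximal matching extending both $v_3l$ and $w_1l'$. Adding up per-pendant extension counts therefore bounds $\sum_M |M\cap P|$ ($P$ the pendant edges), not $m(G)$. So $m\ge 2k+(\text{pendant-free})$ is not established. The triangle argument does not help, because the extensions it produces in $G-z-l$ may contain further pendant edges.

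\textbf{Repair.} Use the paper's split into maximal matchings with exactly one pendant edge, exactly two, or none. These families are disjoint, and the first is disjoint across pendant edges. For a pendant edge $zl$, its exactly-one completions are the maximal matchings of $H-z$ covering all other support vertices. You must check, a finite verification, that there are at least two. Once $m_2=0$ cases are discarded (a leaf on $u$, off the core, or on both vertices of $\{v_1,v_2\}$ or of $\{w_2,w_3\}$), you may take $z\in\{v_1,v_3,w_1,w_2\}$ up to symmetry. The bound of two can be tight: for $X=\{v_1,v_3,w_1,w_2\}$ and $z=v_3$ the only completions are $\{v_1v_2,w_1w_2\}$ and $\{v_1v_2,uw_1,w_2w_3\}$.

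\textbf{Slips in Step~3.}
\begin{itemize}
\item $\{lv_3,v_1v_2,uw_1\}$ is not maximal, since $w_2w_3$ is free.
\item $\{uv_3,v_1v_2,w_2w_3\}$ is not maximal once $w_1$ carries a leaf.
\item For $m_2=3$ with target $(3k+10)/2$, the bound $2k$ plus a couple of core matchings fails at $k=1$.
\end{itemize}
However, $m_2=3$ never occurs. A leaf on any of $v_1,v_2,w_2,w_3$ eliminates exactly two of the four candidates, so $m_2\in\{0,1,2,4\}$. With the corrected bound $m\ge 2k+\gamma(X)$, all targets follow:
\begin{itemize}
\item $m_2=4$: here $\gamma\ge 8$, which gives $m\ge 2k+7$.
\item $m_2=2$: here $\gamma\ge 5$, which gives $m\ge k+4$.
\item $m_2=1$: here $\gamma\ge 3$, which gives $m>(3k+10)/6$.
\end{itemize}
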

\begin{proof}
Let $k'=n-7$ be the number of pendant edges of the graph \(G\) considered in this
lemma. Notice that the same-order extremal graph
\(G^3=T_n^1(3,3)\) has \(n-6=k'+1\) pendant edges. Therefore
\[
avm(G^3)
=
3-\frac{6}{3(k'+1)+7}
=
3-\frac{6}{3k'+10}.
\]

 In \cref{d323}, there are four maximal matchings of size $2$:
\[
 M_1=\{v_1v_3, w_1w_2\},\; M_2=\{v_1v_3, w_1w_3\},\; M_3=\{v_2v_3, w_1w_2\},\; M_4=\{v_2v_3, w_1w_3\}.
\]
All four maximal matchings universally cover $v_3$ and $w_1$. To preserve any of these size-2 maximal matchings without selecting pendant edges, the maximal matching must cover all vertices that have leaves attached; otherwise, an uncovered pendant edge could be legally added, violating maximality. Without loss of generality, we may fix the size‑$2$ core maximal matching to be $\{v_1v_3, w_1w_2\}$. Let the representative set of viable leaf-attachment vertices be $S = \{v_1, v_3, w_1, w_2\}$. If $G$ contains a support vertex that is not on the core, then every maximal matching has size at least 2. Let $d$ be the number of vertices in $S$ that are incident with at least one pendant edge. If \(k'=0\), then \(G\) is exactly the core shown in FIGURE~\ref{d323}. A direct
enumeration gives four maximal matchings of size \(2\) and six maximal
matchings of size \(3\). Hence
\[
m(G)=10,\qquad m'(G)=4\cdot 2+6\cdot 3=26,
\]
and so
\[
avm(G)=\frac{26}{10}=\frac{13}{5}.
\]
On the other hand, the same-order graph \(G^3=T_n^1(3,3)\) has one
pendant edge, and therefore
\[
avm(G^3)=3-\frac{6}{10}=\frac{12}{5}.
\]
Thus
\[
avm(G)=\frac{13}{5}>\frac{12}{5}=avm(G^3).
\]
Hence, in what follows, we may assume that \(k'\ge1\).

\noindent\textbf{Case 1.} $d=1$.
\begin{itemize}

\item \textit{If all pendant edges are attached to $v_1$ (or similarly $w_2$)}. Only $M_1$ and $M_2$ cover $v_1$, restricting $m_2 = 2$. Choosing $(v_1, l_{v_1})$ ($k'$ choices) in a maximal matching yields $k'$ maximal matchings of size 4 (specifically \{$(v_1, l_{v_1})$, $(v_2, v_3)$, $(u, w_1)$, $(w_2, w_3)$\}) and 5$k'$ maximal matchings of size 3 (specifically \{$(v_1, l_{v_1})$, $(v_2, v_3)$, $(w_1, w_2)$\text{ or }$(w_1, w_3)$\}, \{$(v_1, l_{v_1})$, $(u, v_3)$, $(w_1, w_2)$\text{ or }$(w_1, w_3)$\text{ or }$(w_2, w_3)$\}). The pure core yields $m_2 = 2$ and 5 maximal matchings of size 3. Thus, the exact counts are $m_4 = k'$, $m_3 = 5k' + 5$, and $m_2 = 2$. 
\[
m(G) = 6k' + 7, m'(G) = 4(k') + 3(5k'+5) + 2(2) = 19k' + 19.
\]
The average is $avm(G) = \frac{19k'+19}{6k'+7} = 3 + \frac{k'-2}{6k'+7}>avm(G^3)$.

\item \textit{If all pendant edges are attached to $v_3$ (or similarly $w_1$)}. All 4 size-2 maximal matchings cover $v_3$, maintaining $m_2 = 4$. Selecting $(v_3, l_{v_3})$ ($k'$ choices) in a maximal matching yields $k'$ maximal matching of size 4 (specifically \{$(v_3, l_{v_3})$, $(v_1, v_2)$, $(u, w_1)$, $(w_2,w_3)$\}) and 2$k'$ maximal matchings of size 3 (specifically \{$(v_3, l_{v_3})$, $(v_1, v_2)$, $(w_1, w_2)$ \text{ or } $(w_1,w_3)$\}). The pure core adds $m_2=4$ and 5 maximal matchings of size 3. Thus, $m_4 = k'$, $m_3 = 2k' + 5$, and $m_2 = 4$. 
\[
m(G) = 3k' + 9, m'(G) = 4(k') + 3(2k'+5) + 2(4) = 10k' + 23. 
\]
The average is $avm(G) = \frac{10k'+23}{3k'+9}>avm(G^3)$ for any integer $k \ge 1$.
\end{itemize}
\noindent\textbf{Case 2.} \(d\ge 2\).

Let \(H\) be the core shown in FIGURE~\ref{d323}.

Let
\[
X\subseteq S=\{v_1,v_3,w_1,w_2\}
\]
be the set of vertices in \(S\) that are incident with pendant edges.
Thus \(|X|=d\ge 2\). A core matching containing no pendant edge is a
maximal matching of \(G\) only if it covers every vertex in \(X\);
otherwise, an uncovered pendant edge could be added to it. In particular,
a size-\(2\) core maximal matching survives as a size-\(2\) maximal
matching of \(G\) only if it covers all vertices in \(X\).

Moreover, when \(d\ge 2\), no maximal matching of size \(2\) can contain
a pendant edge. Indeed, if a pendant edge is chosen, then the remaining
single edge cannot dominate all edges of the core \(H\) while also
dominating the pendant edges incident with the other vertices of \(X\).
Thus every size-\(2\) maximal matching of \(G\) must be a core matching.

For \(X\subseteq S\), define
\[
\alpha(X)=
\left|
\left\{
M\in \mathcal{M}(H): |M|=2,\ X\subseteq V(M)
\right\}
\right|.
\]
Also define
\[
\gamma(X)=
\left|
\left\{
M\in \mathcal{M}(H): X\subseteq V(M)
\right\}
\right|,
\]
namely, \(\gamma(X)\) is the number of core maximal matchings that cover
all vertices in \(X\). By direct enumeration of the core \(H\), the
following table is obtained:
\[
\begin{array}{c|c|c}
X & \alpha(X) & \gamma(X)\\ \hline
\{v_1,v_3\} & 2 & 6\\
\{v_1,w_1\} & 2 & 6\\
\{v_1,w_2\} & 1 & 5\\
\{v_3,w_1\} & 4 & 8\\
\{v_3,w_2\} & 2 & 6\\
\{w_1,w_2\} & 2 & 6\\ \hline
\{v_1,v_3,w_1\} & 2 & 5\\
\{v_1,v_3,w_2\} & 1 & 4\\
\{v_1,w_1,w_2\} & 1 & 4\\
\{v_3,w_1,w_2\} & 2 & 5\\ \hline
\{v_1,v_3,w_1,w_2\} & 1 & 3
\end{array}
\]
Hence, for the graph \(G\), we have
\[
m_2(G)\le \alpha(X).
\]
Moreover, the number of maximal matchings of \(G\) containing no pendant
edge is precisely \(\gamma(X)\).

We now obtain a lower bound for \(m(G)\). Consider the following three
disjoint families of maximal matchings.

First, suppose that exactly one pendant edge is chosen. For each pendant
edge, the remaining core admits at least two extensions to a maximal
matching, as can be checked directly from the core \(H\). Thus this
family contributes at least \(2k'\) maximal matchings.

Second, suppose that exactly two pendant edges are chosen. Since
\(d\ge 2\), the pendant edges are distributed among at least two distinct
vertices. The number of ways to choose two pendant edges from two
different support vertices is at least \(k'-1\). Each such choice can be
extended to at least one maximal matching. Thus this family contributes
at least \(k'-1\) maximal matchings.

Third, suppose that no pendant edge is chosen. Then the matching must be
a core maximal matching covering all vertices in \(X\). Hence this family
contributes exactly \(\gamma(X)\) maximal matchings.

Therefore,
\[
m(G)\ge 2k'+(k'-1)+\gamma(X)=3k'+\gamma(X)-1.
\]
Since every maximal matching not counted by \(m_2(G)\) has size at least
\(3\), we have
\[
m'(G)\ge 2m_2(G)+3\bigl(m(G)-m_2(G)\bigr)
=3m(G)-m_2(G).
\]
Consequently,
\[
avm(G)
=
\frac{m'(G)}{m(G)}
\ge
3-\frac{m_2(G)}{m(G)}
\ge
3-\frac{\alpha(X)}{3k'+\gamma(X)-1}.
\]

It remains to compare the above lower bound with
\[
avm(G^3)
=
3-\frac{6}{3k'+10}.
\]

If \(X=\{v_3,w_1\}\), then \(\alpha(X)=4\) and \(\gamma(X)=8\). Hence $avm(G)\ge3-\frac{4}{3k'+7}.$
Since $\frac{4}{3k'+7}<\frac{6}{3k'+10}$ is equivalent to $12k'+40<18k'+42$, which holds for every \(k'\ge 1\), we obtain $avm(G)>avm(G^3)$.

If \(|X|=2\) and \(X\ne\{v_3,w_1\}\), then $\alpha(X)\le 2$, $\gamma(X)\ge 5$. Thus $avm(G)\ge3-\frac{2}{3k'+4}$. Since $\frac{2}{3k'+4}<\frac{6}{3k'+10}$ is equivalent to $6k'+20<18k'+24$, we again have $avm(G)>avm(G^3)$.

If \(|X|=3\), then $\alpha(X)\le 2$, $\gamma(X)\ge 4$. Therefore $avm(G)\ge3-\frac{2}{3k'+3}$. Since $\frac{2}{3k'+3}<\frac{6}{3k'+10}$ is equivalent to $6k'+20<18k'+18$, which holds for every \(k'\ge 1\), we obtain $avm(G)>avm(G^3)$.

Finally, if \(X=\{v_1,v_3,w_1,w_2\}\), then $\alpha(X)=1$, $\gamma(X)=3$. Hence $avm(G)\ge3-\frac{1}{3k'+2}$. Since $\frac{1}{3k'+2}<\frac{6}{3k'+10}$ is equivalent to $3k'+10<18k'+12$, we conclude that $avm(G)>avm(G^3)$.

Combining all cases with \(d\ge 2\), we have
\[
avm(G)>avm(G^3).
\]

\end{proof}

\begin{theorem}\label{thm:C}
For any graph $G\in \mathcal{C}(p, q)$ and $G\ncong G^3$, $avm(G)>avm(G^3)$.
\end{theorem}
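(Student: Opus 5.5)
We reduce everything to size-two maximal matchings, as in the proof of Lemma~\ref{lem:no-c4}. In $\mathcal{C}(p,q)$ the two cycles are vertex-disjoint, so a single edge cannot dominate both of them, and no maximal matching has size $1$. Hence
\[
avm(G)\ge 3-\frac{m_2(G)}{m(G)},
\]
and it suffices to show either $m_2(G)=0$ or $m_2(G)/m(G)<6/(3n-11)$. Whenever $m_2(G)=0$ we get $avm(G)\ge 3>avm(G^3)$ immediately.

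\textbf{Step 1: eliminating long cores.} Suppose first that some support vertex lies off the core. Then a size-two maximal matching would need one edge to cover that support vertex, and the other edge alone would have to dominate both cycles, which is impossible; so $m_2=0$. Next, a size-two maximal matching must dominate each cycle using one edge per cycle, together with whatever it needs near the connecting path. If $p\ge 4$, one edge cannot dominate $C_p$ unless it is helped by an edge at the attachment vertex of the path. In that situation the second edge would have to dominate $C_q$ and help on $C_p$ simultaneously, which forces $r=1$ and even then fails for $p\ge 4$. So when $\max\{p,q\}\ge 4$ I expect $m_2=0$, except for a short list of small configurations, namely $p=4$ or $5$ with $q=3$. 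I would check these exceptional configurations directly, as in Cases~2--3 of Lemma~\ref{lem:no-c4}, by bounding $m_2\le$ a small constant and $m\ge k+\text{const}$.

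For $p=q=3$ I would handle the path length $r$ as follows. If $r\ge 3$, the middle edge of the path is dominated by no cycle edge, so the two matching edges (one per triangle) leave it undominated and $m_2=0$. The case $r=2$ is exactly Lemma~\ref{r=2}. This leaves $p=q=3$, $r=1$, with all pendant edges on the core of Figure~\ref{cpq}(C).

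\textbf{Step 2: the case $r=1$.} I would split by which of the path endpoints $v_3$ and $w_1$ carry leaves.
\begin{itemize}
\item If neither carries leaves, Lemma~\ref{noleon3} applies.
\item If both carry leaves, Lemma~\ref{lev3w1} applies.
\item If exactly one carries leaves, say $v_3$ by symmetry, there are two subcases. If some vertex of $S=\{v_1,v_2,w_2,w_3\}$ also carries leaves, Lemma~\ref{lev3} applies. Otherwise all leaves sit on $v_3$, and $G\cong G^3$, which is excluded by hypothesis.
\end{itemize}
One thing to double-check here is that the cases omitted in Lemma~\ref{lev3} all have $m_2=0$. These are three or four vertices of $S$ carrying leaves, or two vertices of $S$ on the same triangle not yet treated, such as $\{v_1,v_2\}$. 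The argument would be the covering argument: a size-two core matching must cover every support vertex and also dominate the bridge, and this is impossible once $v_3$ and both triangle vertices of the same side need covering. I would verify this small list by hand.

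\textbf{Main obstacle.} The delicate part is Step 1 for $p\in\{4,5\}$, $q=3$, and small $r$, where size-two maximal matchings genuinely exist. For these I would follow the template of Lemma~\ref{lem:no-c4}. First list the size-two core maximal matchings; there are at most four, with one edge of $C_p$ being an edge like $v_1v_2$ or $v_2v_3$, and $r=1$. Then use $m\ge 2k$ from single-leaf extensions and compare $m_2/m$ with $6/(3k'+7)$, where $k'$ is the number of pendant edges in the same-order $G^3$. The comparison should follow from a quadratic inequality in $k$, mirroring those in Lemma~\ref{lem:no-c4}. The boundary case with no pendant edges must be computed directly.
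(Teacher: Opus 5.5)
Your overall route is the paper's. You note there are no size-1 maximal matchings, use \(avm(G)\ge 3-m_2(G)/m(G)\), and get \(m_2=0\) for \(r\ge 3\), for \(\max\{p,q\}\ge 4\) and for off-core support vertices. You then cite Lemmas~\ref{noleon3}, \ref{lev3w1}, \ref{lev3} and \ref{r=2} for \(\mathcal{C}(3,3)\) with \(r\in\{1,2\}\). That argument goes through, but two of your specific claims are false.

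\textbf{The ``main obstacle'' does not exist.} In \(\mathcal{C}(p,q)\) with \(q=3\), the triangle edge opposite the attachment vertex can only be dominated by an edge at one of its two endpoints. Such an edge is a triangle edge or a pendant edge there, and it has no vertex on \(C_p\). So the second edge would have to dominate all of \(C_p\) by itself, which is impossible for \(p\ge 4\). A similar count handles \(p,q\ge 4\): an edge meeting both cycles covers only one vertex of each.

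Hence \(m_2=0\) whenever \(\max\{p,q\}\ge 4\), with no exceptions for \(p\in\{4,5\}\). Your own Step~1 reasoning (``even then fails for \(p\ge 4\)'') already shows this, and it is exactly the paper's one-line disposal. The size-2 matchings you picture, built from \(v_1v_2\) or \(v_2v_3\), come from \(\mathcal{B}(4,3)\) and \(\mathcal{B}(5,3)\). There the triangle edge \(uw_1\) also covers a vertex of \(C_p\), which cannot happen when the cycles are disjoint.

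\textbf{Your double-check of Lemma~\ref{lev3} would fail.} Not every configuration omitted from its proof has \(m_2=0\). Suppose \(v_3,v_1,w_2,w_3\) carry \(x,a,b,c\ge 1\) leaves and \(v_2,w_1\) carry none. Then \(\{v_1v_3,\,w_2w_3\}\) is a maximal matching of size 2: it covers every support vertex, and the uncovered vertices \(v_2,w_1\) are nonadjacent and leafless. The same holds with \(v_2\) in place of \(v_1\). Your covering argument only rules out the case where both \(v_1\) and \(v_2\) carry leaves, since then the whole triangle at \(v_3\) must be covered.

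The paper's proof of Lemma~\ref{lev3} silently skips this configuration as well. The analogy with Lemma~\ref{lev3w1} breaks because there \(w_1\) must also be covered, which is what makes three support vertices in \(S\) impossible.

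The repair is easy. In this configuration every maximal matching has size 2, 3 or 4, and the matching above is the only one of size 2. There are at least \(xabc+xbc\ge 2\) maximal matchings of size 4: take one leaf at each of \(v_3,v_1,w_2,w_3\), or one leaf at each of \(v_3,w_2,w_3\) together with \(v_1v_2\). Hence \(m'(G)-3m(G)=m_4-m_2\ge 1\), so \(avm(G)>3>avm(G^3)\).
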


\begin{proof}
No graph $G\in \mathcal{C}(p, q)$ admits a maximal matching of size $2$ when the length of the path connecting the two cycles in $G$ is at least 3. No graph $G\in \mathcal{C}(p, q)$ admits a maximal matching of size $2$ when $p \ge 4$ or $q\ge 4$. If $G$ contains a support vertex that is not on the core of $\mathcal{C}_{p,q}$, then every maximal matching has size at least 3. When \(n=6\), the graph \(T_6^1(3,3)\) has no pendant edges. A direct
enumeration gives
\[
m(T_6^1(3,3))=9,\qquad m'(T_6^1(3,3))=19,
\]
and hence
\[
avm(T_6^1(3,3))=\frac{19}{9}
>
\frac{4n-11}{2n-5}\bigg|_{n=6}
=
\frac{13}{7}.
\]
Thus the case \(n=6\) does not affect the global minimum. So we only need to prove $avm(G)>avm(G^3)$ for $G\in \mathcal{C}(3, 3)$ where the length of the path connecting the two cycles is 1 or 2. Combining Lemmas \ref{avmg3}, \ref{noleon3}, \ref{lev3w1}, \ref{lev3}, and \ref{r=2}, we can get this proof. 
\end{proof}

\section{Proof of the main theorem}

\begin{proof}[Proof of Theorem~\ref{thm:main}]
By the classification of connected bicyclic graphs, every graph
\(G\in \mathcal{G}(n,n+1)\) belongs to one of the three classes
\(\mathcal{A}(p,q,l)\), \(\mathcal{B}(p,q)\), and \(\mathcal{C}(p,q)\).

If \(G\in \mathcal{A}(p,q,l)\), then Theorem~\ref{thm:A} gives
\[
avm(G)\ge avm(G^1)
=\frac{4n-11}{2n-5},
\]
where \(G^1=\theta_n^1(3,3)\). Moreover, equality holds in this case
if and only if \(G\cong G^1\).

If \(G\in \mathcal{B}(p,q)\), then Theorem~\ref{thm:B} gives
\[
avm(G)\ge avm(G^2)
=\frac{7n-27}{3n-11}.
\]
Furthermore,
\[
avm(G^2)-avm(G^1)
=
\frac{7n-27}{3n-11}-\frac{4n-11}{2n-5}
=
\frac{2(n^2-6n+7)}{(2n-5)(3n-11)}
>0
\]
for all \(n\ge 5\). Hence no graph in \(\mathcal{B}(p,q)\) can attain the
global minimum.

If \(G\in \mathcal{C}(p,q)\), then \(n\ge 6\). When \(n=6\), the only graph in
\(\mathcal{C}(p,q)\) is \(T_6^1(3,3)\), and Theorem~\ref{thm:C} gives
\[
avm(T_6^1(3,3))=\frac{19}{9}
>
\frac{4n-11}{2n-5}\bigg|_{n=6}
=
\frac{13}{7}.
\]
Hence no graph in \(\mathcal{C}(p,q)\) with \(n=6\) can attain the global minimum.

Now assume \(n\ge 7\). Then Lemma~\ref{avmg3} and Theorem~\ref{thm:C} give
\[
avm(G)\ge avm(G^3)
=
3-\frac{6}{3n-11}.
\]
Moreover,
\[
avm(G^3)-avm(G^1)
=
\left(3-\frac{6}{3n-11}\right)
-\frac{4n-11}{2n-5}
=
\frac{2(3n^2-23n+37)}{(2n-5)(3n-11)}
>0
\]
for all \(n\ge 7\). Therefore no graph in \(\mathcal{C}(p,q)\) can attain the
global minimum. Consequently,
\[
avm(G)\ge avm(G^1)
=\frac{4n-11}{2n-5}.
\]
Moreover, the above comparisons show that equality can occur only in
the class \(\mathcal{A}(p,q,l)\), and by Theorem~\ref{thm:A} this happens if and only if
\[
G\cong G^1=\theta_n^1(3,3).
\]
This completes the proof.
\end{proof}

\section*{Acknowledgement}
The author wishes to express sincere gratitude to David Guoliang Wang for his constructive feedback and valuable guidance during the final stage of writing.

\end{document}